\newcommand{\overbar}[1]{\mkern 1.5mu\overline{\mkern-1.5mu#1\mkern-1.5mu}\mkern 1.5mu}
\theoremstyle{plain}
\newtheorem{theorem}{Theorem}
\newtheorem{lemma}{Lemma}[section]
\newtheorem{claim}[lemma]{Claim}
\newtheorem{proposition}[lemma]{Proposition}
\newtheorem{corollary}[theorem]{Corollary}
\newtheorem{corollary*}[lemma]{Corollary}
\newtheorem{conjecture}[theorem]{Conjecture}
\newtheorem{definition}[lemma]{Definition}
\title{\vspace{-0.9cm} On Ramsey size-linear graphs and related questions}
\author{
Domagoj Brada\v{c}\thanks{Department of Mathematics, ETH, Z\"urich, Switzerland. Research supported in part by SNSF grant 200021\_196965. Email: \textbf{\{domagoj.bradac, lior.gishboliner, benjamin.sudakov\}@math.ethz.ch}.}
\and 
Lior Gishboliner\footnotemark[1]
\and Benny Sudakov\footnotemark[1]
}
\date{}
\begin{document}
\maketitle 


\begin{abstract}
    In this paper we prove several results on Ramsey numbers $R(H,F)$ for a fixed graph $H$ and a large graph $F$, in particular for $F = K_n$. These results extend earlier work of Erd\H{o}s, Faudree, Rousseau and Schelp and of Balister, Schelp and Simonovits on so-called Ramsey size-linear graphs. Among others, we show that if $H$ is a subdivision of $K_4$ with at least $6$ vertices, then $R(H,F) = O(v(F) + e(F))$ for every graph $F$. We also conjecture that if $H$ is a connected graph with $e(H) - v(H) \leq \binom{k+1}{2} - 2$, then $R(H,K_n) = O(n^k)$. The case $k=2$ was proved by Erd\H{o}s, Faudree, Rousseau and Schelp. We prove the case $k=3$. 
\end{abstract}

\section{Introduction}
For two graphs $H$ and $F$, the Ramsey number $R(H,F)$ is the smallest $N$ such that for every graph $G$ on $N$ vertices, either $G$ contains a copy of $H$ or its complement $\overbar{G}$ contains a copy of $F$.
One of the central problems in graph Ramsey theory is the estimation of Ramsey numbers of complete graphs $R(K_s,K_n)$ for $s$ fixed and large $n$. The classical Erd\H{o}s-Szekeres \cite{ES} theorem implies that $R(K_s,K_n) = O(n^{s-1})$, and this was improved to $R(K_s,K_n) = O(n^{s-1}/\log^{s-2} n)$ by a celebrated result of Ajtai, Koml\'{o}s and Szemer\'{e}di \cite{AKS}. As for lower bounds, Spencer \cite{Spencer} showed that $R(H,K_n) = \tilde{\Omega}(n^{m_2(H)})$
\footnote{As customary, for two functions $f,g$, we write $g(n) = \tilde\Omega(f(n))$ to mean that $g(n) \geq f(n)/\text{polylog}(n)$, $g(n) = \tilde{O}(f(n))$ to mean that $g(n) \leq f(n) \cdot \text{polylog}(n)$, and $g(n) = \tilde{\Theta}(f(n))$ to mean that $f(n)/\text{polylog}(n) \leq g(n) \leq f(n) \cdot \text{polylog}(n)$.} for every graph $H$, where $m_2(H)$ is the $2$-density\footnote{The $2$-density $m_2(H)$ is defined as the maximum of $\frac{e(H')-1}{v(H')-2}$ over all subgraphs $H'$ of $H$ with at least $3$ vertices.} of $H$. This in particular implies that $R(K_s,K_n) = \tilde{\Omega}(n^{(s+1)/2})$. Kim \cite{Kim} improved the implied logarithmic term in the case $s = 3$, obtaining the tight result $R(K_3,K_n) = \Theta(n^2/\log n)$. This was later generalized by Bohman and Keevash \cite{BK}, who improved the logarithmic term for every $s$. On the other hand, no improvement to the exponent of $n$ has been obtained for any $s \geq 4$. Very recently, Mubayi and Verstra\"ete \cite{MV} showed that the existence of optimally-dense pseudorandom $K_s$-free graphs would imply that $R(K_s,K_n) = \tilde{\Omega}(n^{s-1})$, matching the upper bound. This gives some evidence to the conjecture that $R(K_s,K_n) = \tilde{\Theta}(n^{s-1})$ for every $s$. 

A more general problem is to estimate $R(H,K_n)$ for an arbitrary graph $H$. It is well-known that $R(H,K_n) = O(n)$ if and only if $H$ is a forest. In fact, when $H$ is a tree, a classical result of Chv\'atal \cite{Chvatal} gives the exact value of $R(H,K_n)$. It is thus natural to ask which graphs satisfy $R(H,K_n) = O(n^k)$ for $k \geq 2$. Erd\H{o}s, Faudree, Rousseau and Schelp \cite{EFRS} were the first to study this problem, proving several results for the case $k=2$. They
proved that $R(H,K_n) = O(n^2)$ for every connected graph $H$ with $e(H) - v(H) \leq 1$. This result is tight, as $e(K_4) - v(K_4) = 2$ and $R(K_4,K_n) = \tilde{\Omega}(n^{5/2})$ (by the aforementioned result of Spencer \cite{Spencer}). We propose the following conjecture which generalizes the result of Erd\H{o}s, Faudree, Rousseau and Schelp.
\begin{conjecture}\label{conj:e-v}
Let $k \geq 1$. For every connected graph $H$ with $e(H) - v(H) \leq \binom{k+1}{2} - 2$, it holds that $R(H,K_n) = O(n^k)$. 
\end{conjecture}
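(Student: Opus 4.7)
The plan is to reduce Conjecture~\ref{conj:e-v} to the Ajtai--Koml\'os--Szemer\'edi bound $R(K_{k+1},K_n) = \tilde O(n^k)$: it suffices to show that every graph $G$ on $N = C n^k$ vertices with $\alpha(G) < n$ contains every connected $H$ with $e(H)-v(H) \leq \binom{k+1}{2}-2$. To exploit this hypothesis, first pass to the topological structure of $H$. Let $H^*$ be the $2$-core of $H$ (obtained by iterating leaf deletion) and suppress its degree-two chains into a multigraph $H^\circ$ on the set $B$ of branching vertices of $H^*$. Both leaf deletion and suppression preserve $e-v$, so $e(H^\circ) - |B| \leq \binom{k+1}{2}-2$; combined with $\delta(H^\circ) \geq 3$ this forces $|B| \leq 2\binom{k+1}{2}$. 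Hence $H$ decomposes as a constant-size kernel $H^\circ$, a collection of internally-subdivided paths joining branching vertices, and a forest of pendant trees.

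The embedding then proceeds in two phases. \emph{Cleaning:} Tur\'an applied to $\alpha(G)<n$ gives $e(G) = \Omega(N^2/n)$, so iteratively deleting low-degree vertices yields a subgraph $G' \subseteq G$ of order $\Omega(n^k)$, minimum degree $\Omega(n^{k-1})$, still with $\alpha(G') < n$. \emph{Rooted skeleton embedding:} locate a copy of $H^\circ$ inside $G'$ so that every image branching vertex retains $\Omega(n^{k-1})$ unused neighbours, and the required topological edges between image vertices can be realised by internally disjoint paths in $G'$ of the correct prescribed lengths. With the skeleton in place, the subdivided paths are completed by short paths in $G'$ and the pendant trees are attached greedily, both exploiting the large min-degree.

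The heart of the argument, and the main obstruction, is the rooted skeleton embedding. Since the kernel $H^\circ$ can be as dense as $K_{k+1}$ (or $K_{k+1}$ with extra pendant cycles), this step already subsumes the full strength of $R(K_{k+1},K_n) = \tilde O(n^k)$; in addition, one must control neighbourhoods at every branching vertex simultaneously. For $k=2$ the kernel is a short cycle or theta graph and the argument of \cite{EFRS} suffices. For $k=3$, handled in this paper, one presumably combines the AKS bound on $R(K_4,K_n)$ with a dependent random choice step, case-splitting on the isomorphism type of $H^\circ$. For $k \geq 4$, what is needed is a rooted K\H{o}v\'ari--S\'os--Tur\'an-type embedding lemma for arbitrary bounded-size kernels with cyclomatic number $\leq \binom{k+1}{2}-1$; such a tool does not currently exist, and producing it appears to require substantially new ideas beyond dependent random choice, which is essentially why the conjecture remains open for $k \geq 4$.
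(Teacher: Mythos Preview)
This statement is Conjecture~\ref{conj:e-v}, which the paper explicitly leaves open for $k \geq 4$; only the case $k=3$ is proved (Theorem~\ref{thm:v-e, k=3}). Your proposal correctly recognises the conjecture as open and does not claim a full proof, so there is no error to flag at that level.

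Where your sketch and the paper diverge is in the treatment of $k=3$. You speculate that one ``presumably combines the AKS bound on $R(K_4,K_n)$ with a dependent random choice step, case-splitting on the isomorphism type of $H^\circ$''. The paper does none of this. Its proof of Theorem~\ref{thm:v-e, k=3} proceeds instead by \emph{vertex deletion reducing to $k=2$}: after passing to the $2$-connected case (Corollary~\ref{cor:2-connected}), either $v(H)\le 5$ and one checks $\text{tw}(H)\le 3$ directly (invoking Proposition~\ref{prop:tw}), or one deletes a vertex $v$ of maximum degree $\ge 3$, so that $H'=H-v$ satisfies $e(H')-v(H')\le 2$, and then shows $R(H',K_n)=O(n^2)$ via the $k=2$ result of \cite{EFRS}, the treewidth bound, or --- in the residual case where the $2$-core of $H'$ is a $K_4$-subdivision on $\ge 6$ vertices --- Theorem~\ref{thm:K4 subdivisions}. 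A vertex of $G$ of degree $\Omega(n^2)$ then hosts a copy of $H'$ in its neighbourhood, giving $H$.

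So the frameworks are genuinely different. Your topological-kernel decomposition is natural but, as you yourself concede, founders on the absence of a rooted embedding lemma for arbitrary bounded kernels. The paper sidesteps this for $k=3$ not by embedding a kernel at all, but by peeling off one high-degree vertex and falling back on the established $k=2$ theory, with Theorem~\ref{thm:K4 subdivisions} (whose proof does use dependent random choice, though for a quite different purpose) supplying the one genuinely new ingredient. Neither route currently extends to $k\ge 4$.
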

Note that $e(K_{k+2}) - v(K_{k+2}) = \binom{k+1}{2} - 1$. Hence, if the aforementioned conjecture that $R(K_s,K_n) = \tilde{\Omega}(n^{s-1})$ is true, then the constant $\binom{k+1}{2} - 2$ in Conjecture \ref{conj:e-v} would be best possible. In this paper we prove the first open case of Conjecture \ref{conj:e-v}, namely the case $k = 3$. 
\begin{theorem}\label{thm:v-e, k=3}
    Let $H$ be a connected graph with $e(H) - v(H) \leq 4$. Then $R(H,K_n) = O(n^3)$.
\end{theorem}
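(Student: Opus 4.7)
Let $G$ be a graph on $N = Cn^3$ vertices with $\alpha(G) < n$ for a sufficiently large $C = C(H)$; I show $H \subseteq G$. By induction on $v(H)$ (with trees handled by Chv\'atal's theorem as the base case), I may assume $\delta(H) \geq 2$: a leaf $u$ with neighbour $v$ can always be reattached at the end, because within the subgraph of $G$ of minimum degree $\geq N/(2n) \gg v(H)$ (obtained by iteratively removing low-degree vertices while preserving $\alpha(G)<n$), there are many candidates. Under $\delta(H) \geq 2$, the identity $2(e(H) - v(H)) = \sum_v (d_H(v) - 2)$ bounds $|B| \leq 8$, where $B \subseteq V(H)$ is the set of vertices of degree $\geq 3$.

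\emph{Case A: $\Delta(H) \geq 4$.} Choose $v^* \in V(H)$ with $d = d_H(v^*) \geq 4$ such that each component of $H - v^*$ has excess at most $1$; this is possible by a short structural lemma based on the block-decomposition of $H$. Then by the Erd\H{o}s--Faudree--Rousseau--Schelp theorem applied component-wise (the $k=2$ case extends to disconnected target graphs via a sequential embedding argument, since only $O(1)$ vertices are used), $R(H - v^*, K_n) = O(n^2)$. Since $\alpha(G) < n$ forces average degree $\geq N/n - 1 \geq Cn^2/2$, some $u \in V(G)$ satisfies $\deg_G(u) \geq Cn^2/2 \geq R(H - v^*, K_n)$, and $G[N_G(u)]$ contains a copy of $H - v^*$. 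Mapping $v^* \mapsto u$ completes the embedding, since $u$ is adjacent in $G$ to every vertex of $N_G(u)$.

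\emph{Case B: $\Delta(H) \leq 3$.} Every $b \in B$ has degree exactly $3$, and $H$ is a topological subdivision of a cubic multigraph $M$ on vertex set $B$ with $v(M) \leq 8$ and $e(M) \leq 12$. The strategy is: (i) find branch-vertex candidates $\{w_b\}_{b \in B} \subseteq V(G)$ with $|N_G(w_b) \cap N_G(w_{b'})| \geq Cn$ for every $(b,b') \in E(M)$; (ii) for each $(b,b') \in E(M)$ corresponding to a path of constant length $\ell$ in $H$, apply Chv\'atal's bound $R(P_{\ell-2}, K_n) = O(n)$ inside $N_G(w_b) \cap N_G(w_{b'})$ to find an internal path of length $\ell - 2$, and then prepend $w_b$ and append $w_{b'}$. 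Since each such path uses only $O(1)$ internal vertices against $Cn$ candidates, internal disjointness is maintained by embedding the $\leq 12$ paths sequentially, removing used vertices between rounds.

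\emph{Main obstacle.} The crux is step (i) of Case B (and to a lesser extent the structural lemma in Case A). Convexity gives $\sum_w \binom{d_G(w)}{2} \geq \Omega(Nn^4)$, so $\Omega(n^4)$ pairs in $V(G)$ have codegree $\geq Cn$; however, this ``heavy-pair'' graph on $V(G)$ has edge density only $\Theta(n^{-2})$, well below the Tur\'an threshold for containing $K_4$ or $K_{3,3}$, so density alone does not force a copy of $M$. The hypothesis $\alpha(G) < n$ must be leveraged further --- for instance, by choosing the $w_b$ iteratively while at each step passing to a dense subgraph of the current common neighbourhood (which inherits $\alpha < n$ and retains polynomial size, permitting the next selection), or via a two-round dependent-random-choice argument producing a large ``rich'' subset inside which $M$ is readily embedded.
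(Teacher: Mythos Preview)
Your proposal has genuine gaps in both cases.

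In Case~A, the ``short structural lemma'' is false as stated. Take $K_4^*$ (the one-edge subdivision of $K_4$) with subdivision vertex $s$, a path $a\text{--}b\text{--}c$, and a new vertex $v^*$ adjacent to $s,a,b,c$. This $H$ is connected with $\delta(H)=2$, $e(H)-v(H)=13-9=4$, and $v^*$ is the \emph{unique} vertex of degree $\geq 4$; yet $H-v^*$ has a component (namely $K_4^*$) of excess $2$. The fix is easy---reduce first to $2$-connected $H$ via the standard block argument, after which $H-v^*$ is connected with excess $\leq 1$ and EFRS applies---but your leaf-removal reduction does not achieve this.

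The essential gap is Case~B. You correctly flag step~(i) as the crux, but you do not resolve it, and the two suggestions do not obviously work. Iterated descent into common neighbourhoods exhausts itself after two rounds: from $Cn^3$ vertices with $\alpha<n$ you reach a common neighbourhood of size $\Theta(n)$ after picking $w_1,w_2$, leaving no room to continue. Your heavy-pair graph has only $\Theta(n^4)$ edges on $\Theta(n^3)$ vertices, far below any Tur\'an threshold for $K_4$ or $K_{3,3}$, and you give no mechanism to import the hypothesis $\alpha(G)<n$ into that auxiliary graph. (There is also an unaddressed edge case in step~(ii): when a branch path has length $1$ you need $w_bw_{b'}\in E(G)$, which large codegree does not guarantee.)

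The paper's route for this regime is entirely different. After reducing to $2$-connected $H$: if $v(H)\leq 5$, one checks $\text{tw}(H)\leq 3$ and invokes a separately-proved bound $R(H,K_n)=O(n^{\text{tw}(H)})$. If $v(H)\geq 6$ and $\Delta(H)=3$, one deletes a degree-$3$ vertex to get $H'$ with excess $\leq 2$; then either $\text{tw}(H')\leq 2$ (whence $R(H',K_n)=O(n^2)$ by the treewidth bound), or $H'$ contains a $K_4$-subdivision $S$, which is forced to be the $2$-core of $H'$. One then needs $R(S,K_n)=O(n^2)$, and for this the paper invokes its other main theorem: every $K_4$-subdivision on $\geq 6$ vertices is Ramsey size-linear. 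That theorem is substantial in its own right (triangle-cleaning plus dependent random choice), and nothing in your branch-vertex scheme indicates how to bypass it.
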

In the proof of Theorem \ref{thm:v-e, k=3}, we make use of the following claim, which bounds the Ramsey number $R(H,K_n)$ in terms of the treewidth of $H$. This might be of independent interest. Recall that a graph is called a {\em $k$-tree} if it is $K_{k+1}$ or if it is obtained from a smaller $k$-tree by adding a new vertex and connecting it to $k$ vertices which form a clique. The treewidth of $H$ is the minimal $k$ for which $H$ is a subgraph of a $k$-tree.
\begin{restatable}{proposition}{restateproptw} \label{prop:tw}
For every fixed graph $H$, we have $R(H,K_n) = O(n^{\text{tw}(H)})$.
\end{restatable}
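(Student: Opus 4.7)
Let $k = \text{tw}(H)$ and let $T$ be a $k$-tree on $v(H)$ vertices containing $H$ as a subgraph with $V(T) = V(H)$; it suffices to show $R(T, K_n) = O(n^k)$. Fix a perfect elimination ordering $v_1, \ldots, v_m$ of $T$ (where $m = v(H)$), so that $\{v_1, \ldots, v_{k+1}\}$ induces $K_{k+1}$ and each $v_i$ for $i \geq k+2$ is adjacent to exactly $k$ vertices among $\{v_1, \ldots, v_{i-1}\}$ forming a $K_k$-clique $P_i$. Given a graph $G$ on $N \geq C n^k$ vertices with $\alpha(G) < n$ for a sufficiently large $C = C(T)$, the plan is to embed $T$ greedily by choosing, for each $i$, a vertex $\phi(v_i)$ in the common neighborhood $N_G(\phi(P_i))$.

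The first $k$ vertices of $T$ are embedded via the standard Erd\H{o}s--Szekeres iteration: starting with $V_0 = V(G)$, we choose $\phi(v_i) = u_i$ of maximum degree in $V_{i-1} = \bigcap_{j<i} N(u_j)$ and set $V_i = V_{i-1} \cap N(u_i)$. Since $\alpha(G[V_{i-1}]) < n$, Tur\'an's theorem applied to $\overline{G[V_{i-1}]}$ gives a vertex of $V_{i-1}$ with degree at least $|V_{i-1}|/(n-1)$, so $|V_k| \geq N/(n-1)^k$ remains linear in $C$. The vertex $\phi(v_{k+1})$ is then chosen freely in $V_k$. The difficulty lies in embedding $v_{k+2}, \ldots, v_m$: the parent clique $P_i$ may involve vertices outside $\{v_1, \ldots, v_k\}$, so the common neighborhood $N_G(\phi(P_i))$ is not automatically controlled by the initial greedy.

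To resolve this, I would prove by induction on $m$ the following strengthening: for every $r$, there is $C(T, r)$ such that any $G$ with $|V(G)| \geq C(T, r) n^k$ and $\alpha(G) < n$ admits an embedding of $T$ in which every $K_k$-subclique $Q$ of $T$ satisfies $|N_G(\phi(Q))| \geq r$. For the inductive step from $T' = T - v_m$ to $T$, the hypothesis gives an embedding $\phi'$ of $T'$ with $|W| := |N_G(\phi'(P_m))|$ large. The key step is to choose $\phi(v_m) \in W$ so that the new $K_k$-faces of $T$ involving $v_m$ also have large common neighborhoods; using the inclusions $W \subseteq N_G(\phi'(P_m \setminus \{v_j\}))$ for each $v_j \in P_m$, this can be done via the following density fact, itself proved by a Tur\'an-type argument applied to the low-degree set: in any graph $G'$ on $N'$ vertices with $\alpha(G') < n$, at least $N'/2$ vertices have degree at least $N'/(2n)$ in $G'$.

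The main obstacle will be controlling how the parameter $r$ propagates through the induction without blowing up the bound on $N$. The density fact above requires $|W|$ to be of order $rn$, so naively $r$ must grow by a factor of $n$ at each inductive step, which would push the final bound above $O(n^k)$. I expect the fix to be that only the initial $k$ greedy steps contribute an $n$-factor each to the requisite size of $G$, while the remaining $m-k$ vertices can be handled collectively within a single common-neighborhood set using only constant-sized choices; alternatively, one may apply the strengthened claim to an enlarged $k$-tree $T^+$ obtained from $T$ by attaching extra simplicial leaves to the relevant $K_k$-face, whose existence in $G$ directly witnesses the desired common-neighborhood bounds for $T$.
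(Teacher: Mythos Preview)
Your proposal correctly isolates the real difficulty --- controlling the common neighbourhood of the parent clique $P_i$ when $P_i$ uses vertices embedded after the initial $K_{k+1}$ --- but neither of your suggested fixes closes the gap. In your inductive strengthening, to go from $T' = T - v_m$ to $T$ you need $|W| = |N_G(\phi'(P_m))| \geq \Omega(rn)$ in order to pick $\phi(v_m) \in W$ with at least $r$ neighbours in $W$; this forces the parameter for $T'$ to be $\Omega(rn)$, and iterating back to the base $K_{k+1}$ costs a factor of $n$ for \emph{each} of the $m-k-1$ steps, yielding $N = \Omega(n^{m-1})$ rather than $O(n^k)$. Your first fix (``only the initial $k$ steps cost a factor of $n$'') is simply not true for this scheme: every inductive step, not just the first $k$, demands a factor of $n$. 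Your second fix (pass to $T^+$ with extra simplicial leaves) is circular: embedding $T^+$ is the same problem for a larger $k$-tree, and you have not shown how to embed \emph{any} $k$-tree on $O(n^k)$ vertices.

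The paper's missing idea is global rather than local. It first proves a clique-counting lemma: if $|V(G)| \geq C_k n^k$ and $\alpha(G) < n$, then $\#K_{k+1}(G) \geq \frac{|V(G)|}{C_k n^k} \cdot \#K_k(G)$, so in particular $\#K_{k+1}(G) > (v(H)-k-1)\cdot \#K_k(G)$ once $C$ is large. One then runs a cleaning process on the family of $(k+1)$-cliques: whenever some $k$-clique $X$ lies in at most $v(H)-k-1$ surviving $(k+1)$-cliques, delete $X$ and all those $(k+1)$-cliques. The counting inequality guarantees the surviving family $\mathcal{C}_{k+1}$ is nonempty, and by construction every $k$-subclique of a surviving $(k+1)$-clique extends to at least $v(H)-k$ surviving $(k+1)$-cliques. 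Now the greedy embedding along the tree decomposition (mapping each bag to a clique in $\mathcal{C}_{k+1}$) goes through with no $n$-factors after the initial counting step, because at each node the parent $k$-face is a subclique of a surviving $(k+1)$-clique and hence has enough extensions to avoid the at most $v(H)-k-1$ already-used vertices. The point you are missing is that one must secure the ``many extensions'' property \emph{uniformly} for a robust subfamily of cliques before starting the embedding, rather than trying to maintain it locally along a single embedding path.
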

In addition to proving Conjecture \ref{conj:e-v} in the case $k = 3$, we show that the full conjecture holds if $K_n$ is replaced with $K_{n,n}$ (see Proposition \ref{prop:e-v, complete bipartite}). Along the way we also obtain bounds for $R(H,K_{n,n})$ for graphs $H$ with bounded maximum degree or bounded degeneracy; see Corollaries \ref{cor:max degree} and \ref{cor:degenerate}.

Note that if $R(H,K_n) = O(n^2)$ then this can be written as $R(H,K_n) = O(e(K_n))$. This motivated Erd\H{o}s, Faudree, Rousseau and Schelp \cite{EFRS} to define the so-called {\em Ramsey size-linear graphs}. A graph $H$ is called Ramsey size-linear if 
\begin{equation}\label{eq:Ramsey size-linear}
R(H,F) = O(e(F))    
\end{equation}
holds for every graph $F$ with no isolated vertices. This notion was introduced in \cite{EFRS}, where the authors established some basic results and raised several intriguing questions. In particular, Erd\H{o}s, Faudree, Rousseau and Schelp asked whether it is true that every graph $H$ with $m_2(H) \leq 2$ is Ramsey size-linear. This would imply that every $2$-degenerate graph is Ramsey size-linear. These questions seem to be still out of reach at the moment. Perhaps in light of this, Erd\H{o}s et al. also asked about specific graphs $H$. In particular, they asked whether $K_4^*$, the graph obtained from $K_4$ by subdividing one edge, is Ramsey size-linear. This question was later reiterated by Balister, Schelp and Simonovits \cite{BSS}. While we cannot supply an affirmative answer, we can show that \eqref{eq:Ramsey size-linear} at the very least holds for every {\bf bipartite} graph $F$.
\begin{theorem}\label{thm:vs bipartite}
For every bipartite graph $F$ with no isolated vertices, it holds that $R(K_4^*,F) = O(e(F))$. 
\end{theorem}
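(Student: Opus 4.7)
The plan is to exploit a structural property of $K_4^*$-free graphs. Since $K_4^*$ arises from $K_{2,3}$ by adding an edge inside the part of size three, any $K_4^*$-free graph $G$ has the following property: whenever two vertices $u, v$ satisfy $|N_G(u) \cap N_G(v)| \geq 3$, the common neighborhood $N_G(u) \cap N_G(v)$ must be an independent set in $G$, equivalently a clique in $\overbar{G}$. Indeed, if $x, y, z \in N_G(u) \cap N_G(v)$ with $xy \in E(G)$, then $\{u, v, x, y, z\}$ with the edges $ux, uy, uz, vx, vy, vz, xy$ is a copy of $K_4^*$.

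Fix a sufficiently large absolute constant $C$, set $N = C \cdot e(F)$, and let $G$ be $K_4^*$-free on $N$ vertices; the goal is to find $F$ in $\overbar{G}$. First I would apply a dichotomy on the maximum codegree of $G$. If some pair $u, v$ has $|N_G(u) \cap N_G(v)| \geq v(F)$, the structural observation yields a clique of size $\geq v(F)$ in $\overbar{G}$, inside which the bipartite graph $F$ trivially embeds. Otherwise $G$ is $K_{2, v(F)}$-free, so by K\H{o}v\'{a}ri-S\'{o}s-Tur\'{a}n, $e(G) = O(\sqrt{v(F)} \cdot N^{3/2})$; using $v(F) \leq 2 e(F)$ together with $N = C e(F)$, this simplifies to $e(G) = O(N^2/\sqrt{C})$.

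In this sparse case $\overbar{G}$ is nearly complete. I would let $L$ be the set of vertices of $G$-degree $\leq O(N/\sqrt{C})$; Markov's inequality gives $|L| \geq N/2$. Split $L$ randomly into halves $A \cup B$ and keep a split for which $e_G(A, B) = O(N^2/\sqrt{C})$. The bipartite graph $\tilde{H}$ on parts $(A, B)$ whose edges are $G$-non-edges then has bipartite density $\geq 1 - O(1/\sqrt{C})$, and the remaining task is to embed the bipartition of $F$ as $X \to A$, $Y \to B$ inside $\tilde{H}$.

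The hard part is this final embedding step for bipartite $F$ with degeneracy close to $\sqrt{e(F)}$, such as $F = K_{s, s}$ with $s \sim \sqrt{e(F)}$, where a naive greedy procedure fails because the density deficit $O(1/\sqrt{C})$ compounds too aggressively with the degeneracy. To handle this, I would apply dependent random choice inside $\tilde{H}$: locate a large subset of $A$ in which every small subset of vertices has many common $\tilde{H}$-neighbors in $B$, embed the side of $F$ with higher maximum degree into this subset, and complete the other side greedily. The technical heart of the proof is tuning the DRC parameters so as to accommodate every bipartite $F$ with $e(F) = \Theta(N)$ uniformly.
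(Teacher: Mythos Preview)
Your structural observation---that in a $K_4^*$-free graph any common neighbourhood of size at least $3$ is independent---is correct and elegant, and your Case~A (some pair with codegree $\geq v(F)$) is fine. The gap is in Case~B. You reduce to embedding $F$ into a bipartite graph $\tilde H$ on parts $A,B$ of size $\Theta(N)$ with minimum $\tilde H$-degree $\geq (1-\epsilon)|B|$ where $\epsilon = O(1/\sqrt{C})$, and then defer to ``tuning the DRC parameters''. This tuning cannot be done uniformly. Take $F = K_{s,s}$, so $m = s^2$ and $N = Cs^2$. Any dependent-random-choice scheme that samples $t$ vertices from $B$ produces a set $U\subseteq A$ with $\mathbb{E}|U| \leq |A|(1-\epsilon)^t$; to have $|U|\geq s$ you need $t \lesssim \epsilon^{-1}\log(N/s) = O(\sqrt{C}\,\log(Cs))$. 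On the other hand, to kill the $\binom{|A|}{s}(s/|B|)^t$ expected count of ``bad'' $s$-subsets you need $t \geq s(1+o(1))$. These constraints are compatible only when $s \leq O(C\log^2 C)$. The simpler union bound (any $s$ vertices of $A$ miss at most $s\epsilon|B|$ vertices of $B$) likewise works only for $s \leq O(\sqrt{C})$, and K\H{o}v\'ari--S\'os--Tur\'an applied to $\tilde H$ gives the same polylogarithmic range. Using the sharper threshold $v(F)=2s$ in your dichotomy improves $\epsilon$ to $O(1/\sqrt{Cs})$, but the same calculation still caps $s$ at $O(C\log^2 C)$. So for $F=K_{s,s}$ with $s$ large, your Case~B does not close.

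The paper's proof is genuinely different and avoids this obstacle. Rather than dichotomising on \emph{large} codegree, it dichotomises on the set $S$ of pairs with codegree $\leq 2$. After a regularisation step ensuring $\Delta(G)\leq d(G)\log N$, it splits into three cases. In two of them one locates a vertex $x$ whose neighbourhood (or whose set $S(x)$) supplies an \emph{independent set $I$ of size $\Omega(d)$ consisting of vertices of $G$-degree $O(d)$}; this balance is exactly what makes the greedy extension lemma (Lemma~\ref{lem:greedy embedding}) go through, since the crucial quantity $\tfrac{m}{|I|}\cdot\max_{v\in I}d_G(v)$ becomes $O(m)$. In the remaining case $d$ is small, and the paper exploits that neighbourhoods in a $K_4^*$-free graph are $C_4$-free, hence $G$ has few triangles, and then applies the Ajtai--Koml\'os--Szemer\'edi-type bound (Lemma~\ref{lem:AKS}) to again extract such a balanced independent set. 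The essential idea you are missing is that one needs not just a large independent set, but one whose vertices have controlled $G$-degree; your Case~A produces an independent set with no degree control, and your Case~B produces degree control with no independent set.
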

The above question of Erd\H{o}s et al.~for $K_4^*$ motivates the study of Ramsey numbers for subdivisions of $K_4$. Balister, Schelp and Simonovits \cite{BSS} showed (as part of a more general result) that the graph obtained from $K_4$ by subdividing an edge four times is Ramsey size-linear. Here we extend this further, showing that {\em every} subdivision of $K_4$ other than $K_4^*$ is Ramsey size linear.
\begin{theorem}\label{thm:K4 subdivisions}
Every subdivision of $K_4$ on at least $6$ vertices is Ramsey size-linear. 
\end{theorem}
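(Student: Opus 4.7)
The plan is to prove the stronger statement $R(H, F) \leq C \cdot (v(F) + e(F))$ for a constant $C = C(H)$, which specializes to $R(H, F) = O(e(F))$ whenever $F$ has no isolated vertices. I would proceed by induction on $v(F) + e(F)$, reducing first to the case that $F$ is connected: if $F = F_1 \sqcup F_2$, one first embeds $F_1$ into $\overline{G}$ and then applies the inductive hypothesis to $F_2$ on the remaining vertices, using that $v(F) + e(F)$ is additive in components.

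For the main step, suppose $G$ on $N = C \cdot (v(F)+e(F))$ vertices contains no copy of $H$, and the goal is to embed $F$ into $\overline{G}$. Fix a BFS ordering $w_1, \ldots, w_n$ of $V(F)$, so each $w_i$ with $i \geq 2$ has a unique ``parent'' $p(i)$ among the earlier vertices. One tries to embed greedily into $\overline{G}$: pick $u_i$ to be a $G$-non-neighbor of $u_{p(i)}$ not in $\{u_1, \ldots, u_{i-1}\}$. This step succeeds at $w_i$ provided $u_{p(i)}$ has $G$-degree less than $N - i$, so the embedding goes through as long as one can steer ``parent'' images away from vertices of very large $G$-degree.

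The heart of the argument is therefore to control large-degree vertices. I would show that an $H$-free graph $G$ contains at most $O\big((v(F)+e(F))/D\big)$ vertices of $G$-degree at least $D$, for an appropriate constant $D = D(H)$. This exploits the hypothesis $v(H) \geq 6$: $H$ either has a path of length $\geq 3$ between some pair of $K_4$-branch vertices, or has two paths of length $\geq 2$ between distinct pairs, and either kind of ``slack'' lets one route the six subdivided $K_4$-paths through the common neighborhoods of candidate branch vertices among the high-degree vertices of $G$. With such a bound in hand, a careful accounting (that avoids mapping parents to high-degree vertices) makes the greedy embedding succeed, giving the desired linear upper bound on $R(H, F)$.

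The main obstacle is the case analysis on the shape of $H$: subdivisions of $K_4$ on $\geq 6$ vertices come in several combinatorial types -- bipartite versus non-bipartite, a single long path versus several short subdivided edges -- and for each type one must concretely construct an embedding of $H$ into $G$ that uses a few high-degree vertices as branch points and routes the subdivided paths through their neighborhoods using path-embedding tools for dense graphs. This case-by-case construction, combined with the greedy embedding of $F$ into $\overline{G}$, is what delivers the linear bound $R(H, F) = O(v(F) + e(F))$.
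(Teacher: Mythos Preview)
Your proposal has a fundamental gap in the greedy embedding step. You write ``pick $u_i$ to be a $G$-non-neighbor of $u_{p(i)}$'', but this only enforces the single BFS tree edge from $w_i$ to its parent. To embed $F$ (not merely a spanning tree of $F$) into $\overline{G}$, the image $u_i$ must be a $G$-non-neighbour of \emph{every} already-embedded $F$-neighbour of $w_i$. A vertex of $F$ can have back-degree as large as $\Theta(e(F))$, so the number of forbidden vertices at step $i$ is governed by a sum of $G$-degrees over many previously placed images, not just one. Controlling this sum is exactly what makes Ramsey size-linearity delicate; it is why the paper's embedding lemmas (Lemmas~\ref{lem:greedy embedding} and~\ref{lem:bipartite greedy embedding}) order vertices of $F$ by $F$-degree and first place the high-$F$-degree vertices onto a carefully prepared independent set in $G$ whose vertices have controlled $G$-degree.

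Your structural claim --- that an $H$-free $G$ has at most $O((v(F)+e(F))/D)$ vertices of $G$-degree $\geq D$ for a constant $D = D(H)$ --- is either trivial (with $D$ fixed this is $O(N)$) or false as a stronger statement: $H$-free graphs can have average degree growing with $N$, so no uniform bound of this kind is available. The paper does not attempt any such global degree bound. Instead it runs a triangle-elimination process that splits $E(G)$ into $G_+$ (every edge lies in at least $C_0$ triangles of $G_+$) and $G_-$; shows $e(G_+) \geq e(G)/2$; proves that edges of $G_+$ support many long paths inside neighbourhoods (Claim~\ref{claim:subdivision paths}); and then, by a case analysis on the three $6$-vertex subdivisions $H_1,H_2,H_3$, uses averaging and (for $H_2$) dependent random choice to locate a copy of $H_i$ whose relevant edges lie in $G_+$, which by Claim~\ref{claim:finding subdivision} yields $H$. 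Your sketch of ``routing subdivided paths through common neighbourhoods of high-degree vertices'' gestures toward this, but without the $G_+/G_-$ decomposition there is no mechanism guaranteeing the required paths exist, and without a correct embedding scheme for $F$ the other direction does not close either.
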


\begin{figure}
    \centering
    \includegraphics{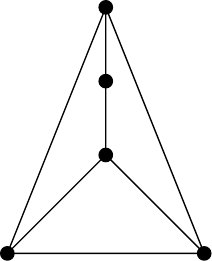}
    \caption{$K_4^*$}
    \label{fig:K_4^*}
\end{figure}

Theorem \ref{thm:K4 subdivisions} is used in the proof of Theorem \ref{thm:v-e, k=3}. Indeed, it is well-known that a graph $G$ has treewidth larger than $2$ if and only if $G$ contains a $K_4$-subdivision. Combining Theorem \ref{thm:K4 subdivisions} with Proposition \ref{prop:tw} and some additional arguments gives Theorem \ref{thm:v-e, k=3}.

The proofs of Theorems \ref{thm:vs bipartite} and \ref{thm:K4 subdivisions} heavily rely on the use of averaging arguments. Theorem~\ref{thm:K4 subdivisions} additionally uses dependent random choice (see e.g., \cite{FS} for a description of this method and a brief history) which can be viewed as a more sophisticated use of averaging and convexity arguments.

The rest of this short paper is organized as follows. Section \ref{sec:prelim} contains some lemmas used in the proofs of Theorems \ref{thm:vs bipartite} and \ref{thm:K4 subdivisions}. We then prove Theorem \ref{thm:vs bipartite} in Section \ref{sec:vs bipartite} and Theorem \ref{thm:K4 subdivisions} in Section \ref{sec:K4 subdivisions}. Section \ref{sec:e-v} contains all results related to Conjecture \ref{conj:e-v}. Finally, the last section includes some comments and related open questions. We use $\log n$ to denote the natural logarithm of $n$. We omit floor and ceiling signs whenever these are not crucial. We use $\delta(G), \Delta(G), d(G)$ to denote the minimum, maximum and average degree of $G$, respectively. We will frequently use the fact that a graph with $n$ vertices and average degree $d$ contains an independent set of size at least $n/(d+1)$. This is a well-known consequence of Tur\'an's theorem. 

\section{Preliminary Lemmas}\label{sec:prelim}


\begin{lemma}\label{lem:greedy empty bipartite graph}
Let $r > 0$. Consider a bipartite graph with sides $X,Y$ with $e(X,Y) \leq r|Y|$. Then there are $X' \subseteq X, Y'\subseteq Y$ such that $|X'| \geq \lfloor |X|/(r+1) \rfloor, |Y'| \geq \lfloor |Y|/(r+1) \rfloor$ and there are no edges between $X'$ and $Y'$. 
\end{lemma}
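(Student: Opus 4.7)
The plan is a simple first-moment argument: set $s := \lfloor |X|/(r+1) \rfloor$, pick $X'$ uniformly at random from $\binom{X}{s}$, and define $Y' := Y \setminus N(X')$. By construction there are no edges between $X'$ and $Y'$, and $|X'| = s = \lfloor |X|/(r+1)\rfloor$ deterministically, so it only remains to verify that some outcome gives $|Y'| \geq \lfloor |Y|/(r+1) \rfloor$.

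For this it suffices to upper bound $\mathbb{E}[|Y \cap N(X')|]$. Any fixed $x \in X$ lies in $X'$ with probability $s/|X|$, so a union bound over the neighbours of $y$ gives $\Pr[y \in N(X')] \leq \deg(y)\cdot s/|X|$ for every $y \in Y$. Summing over $y$ and using the hypothesis $\sum_{y \in Y} \deg(y) = e(X,Y) \leq r|Y|$ together with $s \leq |X|/(r+1)$ yields
\[
\mathbb{E}\bigl[|Y \cap N(X')|\bigr]
\;\leq\; \frac{s\cdot e(X,Y)}{|X|}
\;\leq\; \frac{s\cdot r|Y|}{|X|}
\;\leq\; \frac{r|Y|}{r+1},
\]
and hence $\mathbb{E}[|Y'|] \geq |Y|/(r+1)$. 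Some outcome therefore satisfies $|Y'| \geq |Y|/(r+1)$; since $|Y'|$ is an integer, this outcome in fact has $|Y'| \geq \lceil |Y|/(r+1) \rceil \geq \lfloor |Y|/(r+1) \rfloor$, which completes the proof. The edge cases $|X| \leq r$ or $|Y| \leq r$ are handled automatically, since then one of the target sizes is $0$ and one can take the corresponding set to be empty.

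There is no substantive obstacle; the only design decision worth flagging is the choice to randomise the $X$-side with a subset of \emph{exactly} the target size $s$. The natural symmetric attempt --- including each $y \in Y$ independently with probability $p = 1/(r+1)$ and taking $X' := X \setminus N(Y')$ --- requires bounding $\sum_{x \in X}(1-p)^{\deg(x)}$, and the relevant Jensen estimate applied to the $X$-side average degree $e(X,Y)/|X| \leq r|Y|/|X|$ degrades when $|Y|$ is much larger than $|X|$. Randomising on the $X$-side with a fixed-size subset sidesteps this asymmetry and delivers the bound immediately.
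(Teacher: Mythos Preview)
Your proof is correct and is essentially the same argument as the paper's: the paper phrases it as a direct averaging (``by averaging, there is $X' \subseteq X$ of size $\lfloor |X|/(r+1)\rfloor$ with $e(X',Y) \le e(X,Y)/(r+1)$, hence at least $|Y|/(r+1)$ vertices of $Y$ miss $X'$''), while you phrase the identical first-moment computation probabilistically via a union bound. The extra commentary on why to randomise the $X$-side is fine but unnecessary here.
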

\begin{proof}
By averaging, there is $X'\subseteq X$ with $|X'| = \lfloor |X|/(r+1) \rfloor$ such that $e(X',Y) \leq e(X,Y)/(r+1) \leq r|Y|/(r+1)$. Hence, there are at least $\lfloor |Y|/(r+1) \rfloor$ vertices $y \in Y$ which have no edge to $X'$. Take $Y'$ to be the set of these vertices.
\end{proof}

In the proof of Theorem \ref{thm:vs bipartite}, it is convenient to assume that the host graph is (almost) regular. The following lemma allows us to assume that the maximum degree is larger than the average degree by no more than a logarithmic factor. 
\begin{lemma}\label{lem:regularization}
    Let $G$ be a graph on $N \geq 12$ vertices. Then there is an induced subgraph $G'$ of $G$ with average degree $d(G')$ such that $|V(G')| \geq N/6$ and $\Delta(G') \leq d(G') \cdot \log |V(G')|$.
\end{lemma}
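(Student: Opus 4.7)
The plan is a greedy vertex-deletion argument. I would set $G_0 := G$ and, as long as the current graph $G_i$ (with $n_i = |V(G_i)|$, $m_i = e(G_i)$, and $d_i := 2m_i/n_i$) fails the target inequality $\Delta(G_i) \le d_i \log n_i$, delete a vertex of maximum degree to obtain $G_{i+1}$. The output is the first $G_i$ at which the inequality holds; by construction it satisfies the desired bound on $\Delta(G')$, so the only thing left is to show that the procedure stops with at least $N/6$ vertices still present.

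The key estimate is that each deletion shrinks the edge count by a definite factor: the removed vertex has degree strictly greater than $d_i \log n_i = (2m_i/n_i)\log n_i$, so
\[
m_{i+1} < m_i \left(1 - \frac{2\log n_i}{n_i}\right).
\]
Assume for contradiction that the procedure runs for at least $T = \lfloor 5N/6 \rfloor$ steps, so $n_i = N - i$ throughout. Taking logarithms, applying $\log(1-x) \le -x$, and telescoping gives
\[
\log m_T \;<\; \log m_0 \;-\; 2\sum_{k=N-T+1}^{N} \frac{\log k}{k}.
\]
Since $(\log k)/k$ is decreasing for $k \ge 3$, I would bound the sum from below by the integral
\[
\int_{N/6+1}^{N+1} \frac{\log x}{x}\,dx \;=\; \tfrac12\bigl((\log(N+1))^2 - (\log(N/6+1))^2\bigr),
\]
and combine it with the trivial estimate $m_0 \le \binom{N}{2}$. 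This forces $m_T < 1$, hence $m_T = 0$. But then $d_T = \Delta(G_T) = 0$, so the halting inequality is already satisfied at $G_T$, contradicting the assumption that the procedure continued past step $T$. Therefore the procedure halts with at least $N - T \ge N/6$ vertices.

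The only real obstacle is the numerical verification at the smallest admissible value $N = 12$, where the slack is tightest: one needs $(\log 13)^2 - (\log 3)^2 > \log\binom{12}{2}$, i.e. roughly $5.37 > 4.19$, and this is exactly why the statement requires $N \ge 12$. For all larger $N$ the inequality becomes easier, so the argument is effectively a clean one-page greedy computation with the boundary case verified by hand.
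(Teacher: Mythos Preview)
Your argument is essentially the paper's: repeatedly delete a maximum-degree vertex and exploit the multiplicative drop in a potential. The paper tracks the average degree, uses the uniform per-step factor $1-\frac{2\log N-1}{N-1}$ (valid because $|V(G)|\ge N/3$ throughout), stops after $2N/3$ steps, and then---if the halting condition was never met---passes to an independent set of size $\ge N/6$ inside the remaining $N/3$-vertex graph of average degree $<1$. You instead track the edge count, sum the actual per-step factors, and run for $\lfloor 5N/6\rfloor$ steps so that the deletion graph itself serves as $G'$; this is a cosmetic variation of the same idea.

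One small slip to fix: with $T=\lfloor 5N/6\rfloor$ you have $N-T+1=\lceil N/6\rceil+1\ge N/6+1$, so the comparison
\[
\sum_{k=N-T+1}^{N}\frac{\log k}{k}\;\ge\;\int_{N/6+1}^{N+1}\frac{\log x}{x}\,dx
\]
goes the wrong way when $6\nmid N$ (the integral from the smaller lower limit is larger, not smaller). Use the lower limit $\lceil N/6\rceil+1$ instead; this costs at most one term $\frac{\log(N/6+1)}{N/6+1}$, and the resulting inequality against $\log\binom{N}{2}$ still holds for all $N\ge12$ (the asymptotic gap is $(2\log 6-2)\log N+O(1)$, and direct checks cover the small cases).
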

\begin{proof}
We run the following process for $2N/3$ steps. If the current graph $G$ satisfies 
$\Delta(G) \leq d(G) \cdot \log |V(G)|$, then stop. Otherwise, take $v \in V(G)$ with 
$d_{G}(v) > d(G) \cdot \log |V(G)|$, and replace $G$ with $G - v$. 
Letting $d$ denote the old average degree and $d_{\text{new}}$ the new average degree, we have 
\begin{align*}
d_{\text{new}} &= \frac{d |V(G)| - 2d_{G}(v)}{|V(G)| - 1}
<
\frac{d|V(G)| - 2d\log |V(G)|}{|V(G)| - 1} = 
d \cdot \left( 1 - \frac{2\log |V(G)| - 1}{|V(G)| - 1} \right) \\&\leq 
d \cdot \left( 1 - \frac{2\log N - 1}{N - 1} \right),
\end{align*}
where the last inequality holds because the function $x \mapsto \frac{2\log x-1}{x-1}$ is decreasing for $x \geq 4$, say, and $|V(G)| \geq N/3 \geq 4$ (as we only run the process for $2N/3$ steps and in each step remove one vertex).
So we see that if the process did not stop, then the average degree of the final graph $G$ is at most 
$$(N-1) \cdot \left( 1 - \frac{2\log N - 1}{N - 1} \right)^{2N/3} \leq 
(N-1) \cdot \exp\left( {-\frac{2N/3 \cdot (2\log N - 1)}{N-1}} \right) \leq (N-1) \cdot e^{-\log N} < 1.$$
Therefore, the final graph $G$ contains an empty subgraph on at least $|V(G)|/2 = N/6$ vertices. This subgraph satisfies the assertion of the lemma. 
\end{proof}

The following lemma shows that if some number of highest-degree vertices of a graph $F$ has already been embedded into the complement $\overbar{G}$ of a graph $G$, and if $N = |V(G)|$ is large enough compared to the average degree of $G$ and the degrees of the vertices already used in the embedding, then one can complete the embedding of $F$ into $\overbar{G}$. The proof uses a basic greedy embedding argument. 

\begin{lemma}\label{lem:greedy embedding}
Let $F$ be a graph with $m$ edges. 
Let $0 \leq k \leq |V(F)|$, and let $A$ be the set of the $k$ highest-degree vertices in $F$. Let $G$ be a graph with $N$ vertices and average degree $d$. Let $\sigma' : A \rightarrow V(G)$ be an embedding of $F[A]$ into $\overbar{G}$. Suppose that 
\begin{equation}\label{eq:greedy embedding}
N \geq \frac{4m}{k+1} \cdot \max \left( \max_{v \in \sigma'(A)}{d_G(v)}, \; 2d \right) + 2|V(F)|.
\end{equation}
Then there is an embedding $\sigma$ of $F$ into $\overbar{G}$ which extends $\sigma'$ (i.e. $\sigma(x) = \sigma'(x)$ for every $x \in A$). 
\end{lemma}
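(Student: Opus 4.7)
The plan is a greedy embedding: I would process the vertices of $V(F)\setminus A$ one at a time, always placing each new image in a carefully chosen set of low-degree vertices of $G$, and verify that at each step the number of forbidden candidates is strictly smaller than the number of available ones.

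First I would record the key averaging fact about $F$. Writing the degrees of $F$ in nonincreasing order $d_1 \geq d_2 \geq \cdots \geq d_{|V(F)|}$, the set $A$ consists of the vertices achieving $d_1,\ldots,d_k$, so every $u \in V(F)\setminus A$ satisfies $d_F(u) \leq d_{k+1}$. Since $(k+1)d_{k+1} \leq d_1 + \cdots + d_{k+1} \leq 2m$, we obtain $d_F(u) \leq 2m/(k+1)$ for every $u \notin A$. Next, set $D_1 := \max_{v \in \sigma'(A)} d_G(v)$ and
\[
V^* := \{v \in V(G) : d_G(v) \leq 2d\}.
\]
Since the sum of degrees in $G$ equals $Nd$, strictly fewer than $N/2$ vertices can have degree exceeding $2d$, and hence $|V^*| > N/2$.

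I would then extend $\sigma'$ to $\sigma$ by processing $V(F)\setminus A$ in an arbitrary order, always choosing the new image in $V^*$. When embedding $u \in V(F)\setminus A$, let $B$ denote the set of neighbors of $u$ in $F$ that have already been embedded. A valid choice of $\sigma(u)$ must lie in $V^*$, must differ from the at most $|V(F)|$ already-used vertices, and must avoid $N_G(\sigma(w))$ for every $w \in B$. For $w \in B \cap A$ we have $d_G(\sigma(w)) \leq D_1$ by definition of $D_1$, while for $w \in B \setminus A$ the vertex $\sigma(w)$ lies in $V^*$ by construction and hence $d_G(\sigma(w)) \leq 2d$. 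Using $|B|\leq d_F(u) \leq 2m/(k+1)$, the number of vertices of $G$ that must be avoided is therefore at most
\[
|V(F)| + |B|\cdot \max(D_1,2d) \;\leq\; |V(F)| + \frac{2m}{k+1}\cdot \max(D_1,2d) \;\leq\; \frac{N}{2} \;<\; |V^*|,
\]
where the middle inequality is exactly assumption \eqref{eq:greedy embedding} rearranged. Hence some vertex of $V^*$ is available, and the greedy process succeeds.

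Because the argument is short, there is no single difficult step. The one idea that must be spotted is that restricting all \emph{new} images to $V^*$ uniformly caps their $G$-degree by $2d$; this is what allows the quantity $2d$, rather than a possibly much larger maximum degree in $G$, to appear alongside $D_1$ in the hypothesis \eqref{eq:greedy embedding}. The averaging bound $d_F(u) \leq 2m/(k+1)$ for $u \notin A$ is the other essential input, and it is precisely why the factor $k+1$ shows up in the denominator.
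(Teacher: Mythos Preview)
Your proof is correct and essentially identical to the paper's: you define the same low-degree set $V^*$ (the paper calls it $W$), embed $V(F)\setminus A$ greedily into it, and use the same averaging bound $d_F(u)\le 2m/(k+1)$ together with the two cases $w\in A$ versus $w\notin A$ to count forbidden vertices. The only differences are cosmetic (you write $|V^*|>N/2$ where the paper writes $|W|\ge N/2$, and you use $|V(F)|$ rather than $|V(F)|-1$ for the already-used vertices).
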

\begin{proof}
Let $W$ be the set of $v \in V(G)$ with $d_G(v) \leq 2d$. Then $|W| \geq N/2$. We embed the vertices of $V(F) \setminus A$ one-by-one into $W$. Let $x \in V(F) \setminus A$. We want to choose $\sigma(x) \in W$ which is different from all previously embedded vertices, such that if $y \in V(F)$ has already been embedded and is adjacent in $F$ to $x$, then $\sigma(x)$ is not adjacent in $G$ to $\sigma(y)$. 
If $y \in A$, then the degree of $\sigma(y)$ in $G$ is of course not larger than $\max_{v \in \sigma'(A)}{d_G(v)}$. And if $y \notin A$, then $\sigma(y) \in W$ and hence its degree in $G$ is at most $2d$. 
The total number of vertices $y$ which we need to consider is $d_F(x) \leq 2m/(k+1)$, where the inequality holds by the choice of $A$, as $|A| = k$ and $e(F) = m$.  
So in total, the number of vertices which {\bf cannot} play the role of $\sigma(x)$ is at most 
$\frac{2m}{k+1} \cdot \max \left( \max_{v \in \sigma'(A)}{d_G(v)}, \; 2d \right) + |V(F)| - 1 < N/2 \leq |W|$. Hence, there is a suitable choice for $\sigma(x) \in W$. 
\end{proof}

\begin{corollary*}\label{cor:degeneracy embedding}
Let $F$ be a graph with $m$ edges, and let $G$ be a graph with $N$ vertices and average degree $d \leq \sqrt{\frac{N^2 - 2N \cdot |V(F)|}{48m}}$. Then $\overbar{G}$ contains a copy of $F$. 
\end{corollary*}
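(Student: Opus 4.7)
The plan is to split by the size of $d$. In both cases I will use that the hypothesis $48md^2\leq N(N-2|V(F)|)$ forces in particular $|V(F)|\leq N/2$ (the right-hand side of the displayed bound on $d$ must be non-negative).

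\textbf{Case $d\leq 1$.} The Tur\'an-type bound cited in the introduction gives $\alpha(G)\geq N/(d+1)\geq N/2\geq|V(F)|$, so $\overbar{G}$ already contains a $K_{|V(F)|}$, into which $F$ embeds trivially, with no appeal to Lemma~\ref{lem:greedy embedding} needed.

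\textbf{Case $d>1$.} Here I would apply Lemma~\ref{lem:greedy embedding} with $A$ being the $k$ highest-degree vertices of $F$, for a parameter $k$ to be chosen. The main preparation is a partial embedding $\sigma'\colon A\hookrightarrow V(G)$ of $F[A]$ into $\overbar{G}$ whose image consists of low-$G$-degree vertices, so that the maximum in~\eqref{eq:greedy embedding} collapses to $2d$. To that end, pass to $W=\{v\in V(G):d_G(v)\leq 2d\}$; a one-line averaging (the vertices outside $W$ would overuse the budget $\sum_v d_G(v)=Nd$) shows $|W|>N/2$. Since $\Delta(G[W])\leq 2d$, the independence bound supplies an independent set $I\subseteq W$ in $G$ with $|I|\geq|W|/(2d+1)\geq N/(4d+2)$; this $I$ is automatically a clique in $\overbar{G}$, so any injection $A\hookrightarrow I$ is a valid $\sigma'$. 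Now choose $k$ to be the smallest nonnegative integer with $(k+1)(N-2|V(F)|)\geq 8md$, which is exactly what Lemma~\ref{lem:greedy embedding} demands once $\max_{v\in\sigma'(A)}d_G(v)\leq 2d$ is known. Feasibility reduces to $k\leq|I|$, i.e.\ $8md(4d+2)\leq N(N-2|V(F)|)$, or $32md^2+16md\leq N(N-2|V(F)|)$; since $d>1$ the sub-leading $16md$ is bounded by $16md^2$, so $32md^2+16md<48md^2$, and the inequality follows from the hypothesis. (The boundary case $k\geq|V(F)|$ is handled by taking $A=V(F)$ and embedding all of $F$ directly into $I$.)

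The only delicate point is the two-sided balancing of $k$ in Case $d>1$: the greedy extension wants $k$ large (to sharpen $d_F(x)\leq 2m/(k+1)$), while $\sigma'$ wants $k\leq|I|$. These translate to $k\gtrsim md/N$ and $k\lesssim N/d$ respectively, and reconcile precisely when $d^2\lesssim N^2/m$ — which is the content of the hypothesis up to the explicit constant~$48$, itself collecting the averaging factor $2$ (in $|W|>N/2$), the independence-bound factor $2d+1$, and the greedy factor $4m/(k+1)$ from the lemma.
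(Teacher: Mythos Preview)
Your proof is correct and follows essentially the same approach as the paper: pass to the low-degree vertices $W$, find a large independent set $I\subseteq W$, embed the highest-degree vertices of $F$ into $I$, and finish with Lemma~\ref{lem:greedy embedding}. The only differences are cosmetic --- the paper simply takes $k=\min\{|I|,|V(F)|\}$ rather than your minimal $k$, and tacitly uses $|I|\geq N/(6d)$ (implicitly assuming $d\geq 1$) where your case split on $d\leq 1$ is a bit more careful.
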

\begin{proof}
Let $W$ be the set of $v \in V(G)$ with $d_G(v) \leq 2d$. Then $\Delta(G[W]) \leq 2d$ and $|W| \geq N/2$. Hence, $G[W]$ contains an independent set $I$ of size $k \geq \frac{N/2}{2d + 1} \geq \frac{N}{6d}$. Let $A$ be the set of $\min\{k,|V(F)|\}$ highest-degree vertices of $F$. Mapping $F[A]$ arbitrarily into $I$ gives an embedding of $F[A]$ into $\overbar{G}$, since $I$ is independent in $G$. If $A = V(F)$ then we are done. Else, we apply Lemma \ref{lem:greedy embedding} to complete the embedding of $F$ into $\overbar{G}$. We only need to verify the condition \eqref{eq:greedy embedding}. Since $I \subseteq W$, all vertices in $I$ have degree at most $2d$ in $G$. So for \eqref{eq:greedy embedding} to hold, it suffices that 
$N \geq \frac{4m}{N/6d} \cdot 2d + 2|V(F)|$, which holds by the assumption of the lemma.
\end{proof}

\noindent
Next, we need a bipartite version of Lemma \ref{lem:greedy embedding}.

\begin{lemma}\label{lem:bipartite greedy embedding}
Let $F$ be a bipartite graph with sides $A,B$ and $m$ edges. 
Let $k,\ell \geq 0$, let $A'$ be the set of the $k$ highest-degree vertices in $A$, and let $B'$ be the set of the $\ell$ highest-degree vertices in $B$. 
Let $G$ be a graph with $N$ vertices and average degree $d$. Let $\sigma' : A'\cup B'\rightarrow V(G)$ be an embedding of $F[A' \cup B']$ into $\overbar{G}$. Suppose that
\begin{enumerate}
    \item $B' = B$ or $N \geq \frac{2m}{\ell+1} \cdot \max_{v \in \sigma'(A')}{d_G(v)} + 2|V(F)|$.
    \item $A' = A$ or $N \geq \frac{m}{k+1} \cdot \max \left( \max_{v \in \sigma'(B')}{d_G(v)}, \; 2d \right) + |V(F)|$. 
\end{enumerate}
Then there is an embedding $\sigma$ of $F$ into $\overbar{G}$ which extends $\sigma'$. 
\end{lemma}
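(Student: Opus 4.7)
The approach is to extend the one-phase greedy argument of Lemma~\ref{lem:greedy embedding} into a two-phase procedure matching the bipartition. Set $W=\{v\in V(G):d_G(v)\leq 2d\}$, so $|W|\geq N/2$. Phase~1 embeds the vertices of $B\setminus B'$ one at a time into $W$, and Phase~2 embeds the vertices of $A\setminus A'$ one at a time into $V(G)$. The order is crucial: by handling $B\setminus B'$ first, every $F$-neighbour of a vertex in $A\setminus A'$ has already been placed on a $G$-vertex of bounded degree by the time that vertex is processed. The cases $B'=B$ or $A'=A$ are trivial since the corresponding phase is empty, which is exactly when the associated hypothesis is not invoked.

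For Phase~1, take $y\in B\setminus B'$. Its $F$-neighbours lie in $A$, and only those in $A'$ have been embedded so far, occupying vertices of $G$-degree at most $\max_{v\in\sigma'(A')}d_G(v)$. Since $B'$ is the set of the $\ell$ highest-degree vertices of $B$ in $F$ and $\sum_{b\in B}d_F(b)=m$ (this is where bipartiteness enters), an averaging argument gives $d_F(y)\leq m/(\ell+1)$. Thus the number of forbidden targets for $\sigma(y)$ is at most $\tfrac{m}{\ell+1}\cdot\max_{v\in\sigma'(A')}d_G(v)+|V(F)|$, and hypothesis~(1) makes this strictly less than $|W|\geq N/2$, so a valid $\sigma(y)\in W$ always exists.

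For Phase~2, take $x\in A\setminus A'$. All of $x$'s $F$-neighbours are now embedded: those in $B'$ lie in $\sigma'(B')$ with $G$-degree at most $\max_{v\in\sigma'(B')}d_G(v)$, while those in $B\setminus B'$ lie in $W$ with $G$-degree at most $2d$. Combined with the analogous bound $d_F(x)\leq m/(k+1)$ from the $A$-side, the forbidden set has size at most $\tfrac{m}{k+1}\cdot\max\!\bigl(\max_{v\in\sigma'(B')}d_G(v),\,2d\bigr)+|V(F)|$, which by hypothesis~(2) is strictly smaller than $N$. Crucially, Phase~2 places no restriction to $W$ because the images of $A\setminus A'$ are never used as reference points in any later step; this explains why hypothesis~(2) does not carry the factor of $2$ that appears in hypothesis~(1).

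There is no genuine obstacle. The two delicate points to get right are the ordering of the phases and the bipartite averaging identity $\sum_{b\in B}d_F(b)=m$ (as opposed to $2m$ in a general graph), which together produce the tighter coefficient $m/(\ell+1)$ in place of the $2m/(k+1)$ that featured in Lemma~\ref{lem:greedy embedding}.
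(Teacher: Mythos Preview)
Your argument is correct and follows essentially the same two-phase greedy strategy as the paper: first embed $B\setminus B'$ into the low-degree set $W$, then embed $A\setminus A'$ into all of $V(G)$, with the same degree-averaging bounds $d_F(y)\le m/(\ell+1)$ and $d_F(x)\le m/(k+1)$. The only (harmless) slack is that you write $+|V(F)|$ for the count of already-used vertices where the paper uses $+|V(F)|-1$, which is what actually yields the strict inequality you invoke.
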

\begin{proof}
Let $W$ be the set of $v \in V(G)$ with $d_G(v) \leq 2d$. Then $|W| \geq N/2$. We will embed the vertices of $V(F) \setminus (A'\cup B')$ one-by-one. We first embed the vertices of $B \setminus B'$ into $W$. Let $b \in B \setminus B'$. We want to choose $\sigma(b) \in W$ such that $\sigma(b)$ is not adjacent in $G$ to $\sigma'(a)$ for any $a \in A'$ with $(a,b) \in E(F)$, and such that $\sigma(b)$ is different from all previously embedded vertices. 
We have $d_F(b) \leq m/(\ell+1)$, because $b \notin B'$ and $B'$ is the set of the $\ell$ highest-degree vertices in $B$.  
So the number of vertices which {\bf cannot} play the role of $\sigma(b)$ is at most $\frac{m}{\ell+1} \cdot \max_{v \in \sigma'(A')}{d_G(v)} + |V(F)| - 1 < N/2 \leq |W|$, where the first inequality uses Item 1. Therefore, there is a suitable choice for $\sigma(b) \in W$.

Suppose now that we have embedded $B \setminus B'$, and let us embed the vertices of $A \setminus A'$ (here we no longer insist that vertices are embedded into $W$). Let $a \in A \setminus A'$. We need to show that there is $\sigma(a) \in V(G)$ such that $\sigma(a)$ is not adjacent in $G$ to $\sigma(b)$ for any $b \in B$ with $(a,b) \in E(F)$, and such that $\sigma(a)$ is different from all previously embedded vertices. As above, we have $d_F(a) \leq m/(k+1)$. For each $b \in B \setminus B'$, we have $d_G(\sigma(b)) \leq 2d$ because $\sigma(b) \in W$. Therefore, the number of vertices which {\bf cannot} play the role of $\sigma(a)$ is at most
$
\frac{m}{k+1} \cdot \max\left( \max_{v \in \sigma'(B')}d_G(v), \; 2d \right) + |V(F)| - 1 < N,$
using Item 2. So there is a valid choice for $\sigma(a) \in V(G)$. 
\end{proof} 

Finally, we will need the following well-known result on the independence number of graphs with few triangles, see e.g. \cite[Lemma 12.16]{Bollobas_RandomGraphs}. 
\begin{lemma}\label{lem:AKS}
Let $G$ be a graph with $N$ vertices, average degree $d$, and at most $T$ triangles. Then $G$ contains an independent set of size at least $0.1 \frac{N}{d} \cdot \left( \log d - \frac{1}{2}\log(T/N) \right)$.
\end{lemma}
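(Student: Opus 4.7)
The plan is to reduce to the triangle-free case and then apply Shearer's classical bound, which says that every triangle-free graph on $n$ vertices with average degree $d'$ contains an independent set of size $\Omega(n\log d'/d')$. The parameters in the target bound strongly suggest random sparsification at probability $p \asymp \sqrt{N/T}$.

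More precisely, set $p = \min\bigl\{1,\sqrt{N/(cT)}\bigr\}$ for a suitable absolute constant $c$, and sample $W \subseteq V(G)$ by including each vertex independently with probability $p$. Then $\mathbb{E}[|W|] = pN$, $\mathbb{E}[e(G[W])] = p^2 Nd/2$, and the expected number of triangles in $G[W]$ is at most $p^3T$. By choice of $p$, the expected triangle count is at most a small constant fraction of $\mathbb{E}[|W|]$. Using a standard probabilistic argument (e.g.\ form the single random variable $|W| - 2\cdot\mathrm{tri}(G[W]) - \alpha\cdot e(G[W])$ for an appropriate $\alpha$, or apply Markov/Chernoff bounds to each quantity individually and take a union bound), one extracts a particular $W$ for which all three quantities are within constant factors of their expectations.

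Fix such a $W$ and remove one vertex from each triangle in $G[W]$ to obtain an induced triangle-free subgraph $G'$. The construction yields $|V(G')| = \Omega(pN)$ and $e(G') = O(p^2 Nd)$, so the average degree of $G'$ is $O(pd)$. Shearer's theorem then gives
\[
\alpha(G) \;\geq\; \alpha(G') \;=\; \Omega\!\left(\frac{|V(G')| \cdot \log(pd)}{pd}\right) \;=\; \Omega\!\left(\frac{N\log(pd)}{d}\right).
\]
Finally, $\log(pd) = \log d + \tfrac{1}{2}\log(N/T) - O(1) = \log d - \tfrac{1}{2}\log(T/N) - O(1)$, which after absorbing the additive constant and tuning the leading constant produces the stated $0.1$. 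The degenerate regime $T \leq N$ forces $p=1$; there deleting one vertex per triangle leaves a triangle-free graph with $\Omega(N)$ vertices and average degree $O(d)$, and Shearer applied directly gives $\Omega((N/d)\log d)$, which dominates the claim since $\log(T/N)\leq 0$.

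The main obstacle is the joint control in the second paragraph: we need a single $W$ that is simultaneously large, triangle-sparse, and edge-sparse. This is the only quantitative step, and it is handled either by a judicious linear combination (bypassing any union bound) or by individually applying Markov to the edge and triangle counts and Chernoff to $|W|$. The remaining analysis is the calibration of $p$ so that the Shearer bound on $G'$ reproduces exactly the $\log d - \tfrac{1}{2}\log(T/N)$ factor.
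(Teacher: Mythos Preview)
The paper does not give its own proof of this lemma; it is quoted as a well-known result with a pointer to \cite[Lemma 12.16]{Bollobas_RandomGraphs}. Your sketch (random sparsification at $p\asymp\sqrt{N/T}$, delete one vertex per surviving triangle, then apply Shearer's bound for triangle-free graphs) is exactly the standard argument behind that reference, and it is correct.

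Two small remarks on the execution. First, the ``single linear combination'' route you mention does work cleanly: with $X=|W|-2\,\mathrm{tri}(G[W])-\tfrac{1}{pd}\,e(G[W])$ and $p=\sqrt{N/(cT)}$ one gets $\mathbb{E}[X]\ge pN/4$, and from a realization with $X\ge pN/4$ both needed controls follow at once, since $|V(G')|\ge |W|-\mathrm{tri}(G[W])\ge pN/4+e(G[W])/(pd)$ forces the average degree of $G'$ to be at most $2pd$ regardless of whether $e(G[W])$ is near its mean. Second, when invoking Shearer with only an \emph{upper} bound $D=O(pd)$ on the average degree of $G'$, one should note that $x\mapsto (\log x)/x$ is decreasing for $x\ge e$, and the remaining range $d(G')<e$ is handled by the trivial bound $\alpha(G')\ge |V(G')|/(d(G')+1)$; this is routine but worth a sentence. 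The additive $O(1)$ inside the logarithm is indeed absorbable into the leading constant, using the trivial Tur\'an bound when $\log d-\tfrac12\log(T/N)$ is bounded.
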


\section{Proof of Theorem \ref{thm:vs bipartite}}\label{sec:vs bipartite}
Let us first sketch the proof of Theorem \ref{thm:vs bipartite} in the case $F = K_{n,n}$. So let $G$ be a graph on $N = Cn^2$ vertices with no copy of $K_4^*$. We need to show that $\overbar{G}$ contains a copy of $K_{n,n}$. First, it is easy to see that by deleting some $N/2$ (say) vertices, we may assume that the minimum degree of $G$ is $\Omega(Cn)$. (Else, $G$ contains an independent set of size $2n$, so $\overbar{G}$ contains a $K_{n,n}$, as required.) Let $S$ be the set of pairs of vertices $(x,y)$ such that $x,y$ have at most two common neighbours. Suppose first that there is $x \in V(G)$ such that $S(x) := \{y : (x,y) \in S\}$ has size at least $3n$. In this case, take disjoint sets $A \subseteq N_G(x), B \subseteq S(x)$, each of size $3n$ (this is possible because $d(x) = \Omega(Cn) \geq 6n$). By the definition of $S$, each vertex in $B$ has at most two common neighbours with $x$, hence it has at most two neighbours in $A$. This allows us to find greedily an $n\times n$ empty bipartite graph between $A,B$, as required. 

So from now on suppose that $|S(x)| \leq 3n$ for each $x \in V(G)$, implying that $|S| \leq 3nN/2$.
Now, taking a vertex $v \in V(G)$ at random, we see that the number of pairs $(x,y) \in S$ contained in $N(v)$ is on average at most $3n$ (each pair from $S$ is counted at most twice when averaging over $v$, by the definition of $S$). So fix $v \in V(G)$ with at most $3n$ pairs $(x,y) \in S$ inside $N(v)$. Recall that $|N(v)| = \Omega(Cn)$. We may assume that the average degree inside $N(v)$ is at least $\Omega(C)$, because otherwise $N(v)$ would contain an independent set of size $2n$, and we would be done. It follows that $G[N(v)]$ contains at least $\sum_{u \in N(v)} \binom{d_{G[N(v)]}(u)}{2} = \Omega(C^3 n)$ paths of length two where we used that $\binom{x}{2}$ is convex. Also, each pair $x,y$ can be the endpoints of at most one such path of length two, because otherwise we get a $C_4$ inside $N(v)$, and hence a $K_4^*$ together with $v$. So in $N(v)$ there are at least $\Omega(C^3 n) > 3n$ pairs $(x,y)$ which are the endpoints of a path of length two. Hence, one of these pairs is not in $S$. Fix such a pair $x,y$, and let $x,z,y$ be a path of length two inside $N(v)$. Since $(x,y) \notin S$, there is an additional neighbour $u \notin \{v,z\}$ of $x,y$. Now $v,x,y,z,u$ form a copy of $K_4^*$. 


Unfortunately, we were not able to adapt the above proof in a clean way to work for every bipartite graph $F$. Finding such a concise proof of Theorem \ref{thm:vs bipartite} would be interesting. Instead, to make the proof work for an arbitrary bipartite $F$, we apply regularization to $G$ via Lemma \ref{lem:regularization}, ensuring that the maximum degree is at most a logarithmic factor away from the average degree. 
This ``almost-regularity'' of $G$ will be useful when applying Lemmas \ref{lem:greedy embedding} and \ref{lem:bipartite greedy embedding} in certain steps of the proof.
To compensate for the (extra) logarithmic factor, we use Lemma \ref{lem:AKS}. The details follow. 

\begin{proof}[Proof of Theorem \ref{thm:vs bipartite}]
Let $F$ be a bipartite graph with sides $A,B$, having $m$ edges and no isolated vertices. Note that $|V(F)| \leq 2m$. Let $G$ be a graph on $N = Cm$ vertices with no copy of $K_4^*$, where $C$ is a large enough constant. Our goal is to show that $\overbar{G}$ contains a copy of $F$.
By Lemma \ref{lem:regularization}, there is an induced subgraph $G'$ of $G$ with $|V(G')| \geq N/6$ and $\Delta(G') \leq d(G') \cdot \log|V(G')|$. With a slight abuse of notation, we will use the notation $G$ for $G'$ and $N$ for $|V(G')|$; so $|V(G)| = N$ and $\Delta(G) \leq d(G) \cdot \log N$.
Put $d := d(G)$. 

Let $S$ be the set of pairs $(x,y) \in \binom{V(G)}{2}$ with $d_G(x,y) \leq 2$. We proceed with several cases.
\paragraph{Case 1:} $|S| \geq 2Nd \log N$. For each $x \in V(G)$, let $S(x)$ be the set of $y \in V(G)$ with $d(y) \leq d(x)$ and $(x,y) \in S$. Then $\sum_{x \in V(G)}{|S(x)|} \geq |S|$. Hence, there is $x$ with $|S(x)| \geq 2d\log N$. Since $\Delta(G) \leq d\log N$, we have 
$|S(x) \setminus N_G(x)| \geq d\log N$. Each $y \in S(x)$ has at most $2$ neighbours in $N_G(x)$, by the definition of $S$. By Lemma \ref{lem:greedy empty bipartite graph} with $r = 2$, $X = N_G(x)$ and $Y = S(x) \setminus N_G(x)$, there exist $X'\subseteq N_G(x)$ and $Y'\subseteq S(x) \setminus N_G(x)$ such that $|X'| \geq \lfloor d_G(x)/3 \rfloor$,
$|Y'| \geq \lfloor |S(x) \setminus N_G(x)|/3 \rfloor \geq \lfloor d\log(N)/3 \rfloor$, and there are no edges in $G$ between $X'$ and $Y'$. 

Let $A' \subseteq A$ be the set of the $k := \min\{|Y'|,|A|\}$ highest-degree vertices in $A$, and let $B' \subseteq B$ be the set of the $\ell := \min\{|X'|,|B|\}$ highest-degree vertices in $B$. Map $A'$ into $Y'$ and $B'$ into $X'$ arbitrarily. This mapping $\sigma'$ is an embedding of $F[A' \cup B']$ into $\overbar{G}$, because there are no edges in $G$ between $X'$ and $Y'$. 
We now verify that Items 1-2 in Lemma \ref{lem:bipartite greedy embedding} hold. 
All vertices in $\sigma'(A') \subseteq Y' \subseteq S(x)$ have degree at most $d_G(x)$ by the definition of $S(x)$. Hence (assuming $B' \neq B$), we have 
$$
\frac{2m}{\ell+1} \cdot \max_{v \in \sigma'(A')}{d_G(v)} \leq \frac{2m}{|X'|+1} \cdot d_G(x) \leq \frac{2m}{d_G(x)/3} \cdot d_G(x) = 6m.
$$
Also, $|V(F)| \leq 2m$.
Therefore, Item 1 in Lemma \ref{lem:bipartite greedy embedding} holds provided that $N \geq 10m$. Next, (assuming $A' \neq A$), we have 
$$
\frac{m}{k+1} \cdot \max \left( \max_{v \in \sigma'(B')}{d_G(v)}, \; 2d \right) \leq 
\frac{m}{|Y'|+1} \cdot 2\Delta(G) \leq \frac{m}{d\log(N)/3} \cdot 2d\log N \leq 6m.
$$
So Item 2 in Lemma \ref{lem:bipartite greedy embedding} holds as well, provided that $N \geq 8m$. Hence, $\overbar{G}$ contains a copy of $F$, as \nolinebreak required. 

\paragraph{Case 2:} $|S| \leq 2Nd \log N$ and $d \geq 576\sqrt{m\log N}$. For each $x \in V(G)$, let $N'(x)$ denote the set of neighbours $y$ of $x$ with $d(y) \leq d(x)$, and let $d'(x) = |N'(x)|$. Then $\sum_{x \in V(G)}{d'(x)} \geq e(G) = dN/2$. Let $t(x)$ be the number of pairs $(y,z) \in S$ such that $y,z \in N'(x)$. We have $\sum_{x \in V(G)}{t(x)} \leq 2|S|$, because each pair in $S$ is counted at most twice in this sum, by the definition of $S$. 
Observe that
\begin{align*}
    \sum_{x \in V(G)}{\Big[ 16\log N \cdot \Big(d'(x) - d(x)/8 - d/8 \Big) - t(x)\Big]} &\geq 
    16\log N \cdot e(G)/2 - \sum_{x \in V(G)}{t(x)} \\&= 4dN\log N - \sum_{x \in V(G)}{t(x)} \geq 4dN\log N - 2|S| \geq 0. 
\end{align*}
Hence, there is $x \in V(G)$ with $16\log N \cdot \Big(d'(x) - d(x)/8 - d/8 \Big) - t(x) \geq 0$. In particular, $d'(x) \geq d(x)/8, d/8$, and the number $t(x)$ of pairs $(y,z) \in S$ with $y,z \in N'(x)$ is at most $16d'(x) \log N$. 
Fix such $x$, let $G_1 := G[N'(x)]$ and let $d_1 := d(G_1)$ be the average degree of $G_1$. 

We claim that $d_1 \leq 6\sqrt{\log N}$. Suppose otherwise. Then, by convexity, the number of paths of length $2$ in $G_1$ is at least $|N'(x)| \cdot \binom{6\sqrt{\log N}}{2} > 16d'(x) \log N \geq t(x)$. 
A pair of vertices from $N'(x)$ can be the endpoints of at most one path of length two in $G_1$, because otherwise $G_1 = G[N'(x)]$ would contain a copy of $C_4$, which together with $x$ would give a copy of $K_4^*$ in $G$, a contradiction. So we see that there are more than $t(x)$ pairs $(y,z) \in \binom{N'(x)}{2}$ which are the endpoints of a path of length $2$ in $G_1$. Hence, there is such a pair $(y,z)$ which does {\bf not} belong to $S$. Let $w$ be the middle vertex of the path of length two between $y$ and $z$ in $G_1$. Since $(y,z) \notin S$, there is a common neighbour $u$ of $y,z$ with $u \neq x,w$. Now $x,y,z,w,u$ span a copy of $K_4^*$, a contradiction. This proves the claim that $d_1 \leq 6\sqrt{\log N}$. 

Let $A \subseteq V(F)$ be the set of $k := \min\{d'(x)/3, |V(F)|\}$ highest-degree vertices in $F$. 
Our goal is to embed $F[A]$ into $\overbar{G}[N'(x)]$, and then use Lemma \ref{lem:greedy embedding} to extend this into an embedding of $F$ into $\overbar{G}$. To embed $F[A]$ into $\overbar{G}[N'(x)]$, we use Corollary \ref{cor:degeneracy embedding} with $F[A]$ in the role of $F$ and $G_1 = G[N'(x)]$ in the role of $G$.
To apply the corollary, we need to verify the condition 
\begin{equation}\label{eq:embedding into G_1}
d_1 \leq \sqrt{\frac{|V(G_1)|^2 - 2|V(G_1)| \cdot |A|}{48 e(F[A])}}.
\end{equation}
By definition, $|A| \le d'(x)/3$. Also, $|V(G_1)| = d'(x)$, $d_1 \leq 6\sqrt{\log N}$ and $e(F[A]) \le m$. So
the RHS of \eqref{eq:embedding into G_1} is at least $\frac{d'(x)}{12\sqrt{m}}$, and hence \eqref{eq:embedding into G_1} holds provided that $d'(x) \geq 72\sqrt{m\log N}$, as $d_1 \leq 6\sqrt{\log N}$. But $d'(x) \geq d/8 \geq 72\sqrt{m\log N}$ by the assumption of Case 2, so \eqref{eq:embedding into G_1} indeed holds. By Corollary \ref{cor:degeneracy embedding}, there is an embedding $\sigma'$ of $F[A]$ into $\overbar{G}[N'(x)]$.

We now use Lemma \ref{lem:greedy embedding} to embed $F$ into $\overbar{G}$. Recall that all vertices in $\sigma'(A) \subseteq N'(x)$ have degree at most $d(x) \leq 8d'(x)$ in $G$ (by the definition of $N'(x)$), and that the average degree of $G$ is $d \leq 8d'(x)$. Also, $k = d'(x)/3$ assuming $A \neq V(F)$, by our choice of $k$. 
Therefore, we can bound the first term on the RHS of \eqref{eq:greedy embedding} as follows:
$\frac{4m}{k+1} \cdot \left( \max_{v \in \sigma'(A)}{d_G(v)}, \; 2d \right) \leq 
\frac{12m}{d'(x)} \cdot 16d'(x) = 192m$. Also, $|V(F)| \leq 2m$. 
So \eqref{eq:greedy embedding} holds for $N \geq 196m$. We conclude that $\overbar{G}$ contains a copy of $F$, as required.

\paragraph{Case 3:} $d \leq 576\sqrt{m\log N} = O(\sqrt{N\log N})$ (recall that $N = Cm$ so $m \leq N$).
Let $W$ be the set of vertices of $G$ of degree at most $2d$. Then $|W| \geq N/2$.
Let $d_0 := d(G[W])$ be the average degree of $G[W]$. Then $d_0 \leq \Delta(G[W]) \leq 2d = O(\sqrt{N\log N})$. 

Let us bound the number of triangles in $G[W]$. We have $\Delta(G[W]) \leq 2d = O(\sqrt{N\log N}) = N^{1/2+o(1)}$.   
For each vertex $x$, the neighbourhood of $x$ in $G$ contains no $C_4$, because otherwise $G$ would contain a copy of $K_4^*$. Hence, $x$ participates in at most 
$\Delta(G[W])^{3/2} \leq N^{3/4+o(1)} \leq d_0^{3/2 + o(1)}$ triangles of $G[W]$. So the overall number of triangles in $G[W]$ is at most $T := |W| \cdot d_0^{3/2 + o(1)}$. 
By Lemma \ref{lem:AKS}, applied to $G[W]$, there is an independent set $I$ in $G[W]$ of size at least 
\begin{equation*}\label{eq:greedy embedding Case 3}
\begin{split} 
|I| &\geq 0.1\frac{|W|}{d_0} \cdot \left( \log d_0  - \frac{1}{2}\log \left(\frac{T}{|W|} \right) \right) = 
0.1\frac{|W|}{d_0} \cdot \left(\frac{1}{4} - o(1)\right) \log d_0 \\&\geq 
0.01\frac{N}{d_0} \log d_0 \geq \Omega\left( \sqrt{N\log N} \right) \geq \Omega(d).
\end{split}
\end{equation*}
Here, the penultimate inequality uses that $d_0 \leq O(\sqrt{N\log N})$ and that the function $x \mapsto \frac{\log x}{x}$ is decreasing (for $x \geq e$), and the last inequality uses that $d \leq O(\sqrt{N\log N})$.

Let $A$ be the set of the $k := \min\{|I|,|V(F)|\}$ highest-degree vertices in $F$. We use Lemma \ref{lem:greedy embedding} to embed $F$ into $\overbar{G}$, starting with an arbitrary embedding of $F[A]$ into $I$. Recall that all vertices in $I \subseteq W$ have degree at most $2d$ in $G$. 
Hence, assuming $A \neq V(F)$, the RHS of condition \eqref{eq:greedy embedding} is at most 
$\frac{4m}{k+1} \cdot 2d + 2|V(F)| \leq \frac{4m}{|I|} \cdot 2d + 4m \leq O(m)$, using that $|I| =\Omega(d)$. 
Therefore, condition \eqref{eq:greedy embedding} holds for $N = Cm$, provided that $C$ is large enough compared to the implied constant in the $O$-notation in the previous sentence. So $\overbar{G}$ contains a copy of $F$. 
This completes the proof.
\end{proof}

\section{Proof of Theorem \ref{thm:K4 subdivisions}}\label{sec:K4 subdivisions}

\begin{proof}
There are three subdivisions of $K_4$ on $6$ vertices, and we denote these by $H_1$, $H_2$, $H_3$; see the figure above. Every subdivision of $K_4$ on more than six vertices is a subdivision of $H_i$ for some $i = 1,2,3$.
\begin{figure}
    \centering
    \includegraphics{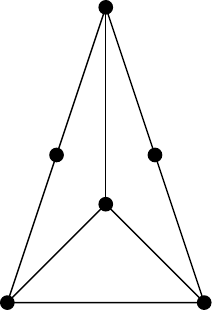}
    \hspace{0.5cm}
    \includegraphics{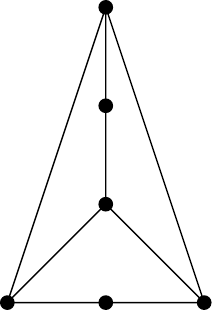}
    \hspace{0.5cm}
    \includegraphics{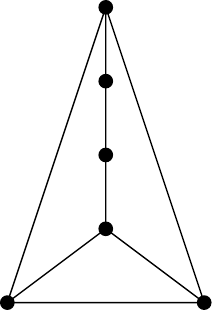}
    \caption{$H_1$, $H_2$, $H_3$ (from left to right)}
\end{figure}
So let $H$ be a subdivision of $H_1$, $H_2$ or $H_3$. Let $h := |V(H)|$. Fix constants $h \ll C_0 \ll C_1 \ll C$, to be chosen implicitly later. 
Let $F$ be a graph with $m$ edges and no isolated vertices. Let $G$ be a graph on $N = Cm$ vertices. We assume that $\overbar{G}$ has no copy of $F$ and our goal is to show that $G$ has a copy of $H$. We begin with some general preparation that will be used in all three cases of $H_1,H_2,H_3$. 
Let $d = d(G)$ be the average degree of $G$. Under the assumption that $\overbar{G}$ has no copy of $F$ (and that $C$ is large enough in terms of $C_1$), we now prove the following:
\begin{claim}\label{claim:greedy embedding}
The following holds:
\begin{enumerate}
    \item $d \geq 0.1N/\sqrt{m} \geq C_1\sqrt{N}$.
    \item There is no independent set $I$ such that $|I| \geq d/C_1$ and all vertices in $I$ have degree at most $C_1 \cdot |I|$.
\end{enumerate}
\end{claim}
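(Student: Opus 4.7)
The plan is to prove both items by contrapositive, each time using one of the embedding tools from Section \ref{sec:prelim} to build a copy of $F$ in $\overbar{G}$ and contradict the standing assumption. Corollary \ref{cor:degeneracy embedding} is exactly what is needed for Item 1, and Lemma \ref{lem:greedy embedding} is exactly what is needed for Item 2; the claim is essentially a convenient repackaging of these two tools for the later case analysis on $H_1,H_2,H_3$.

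For Item 1, I would assume $d < 0.1\,N/\sqrt{m}$ and check that Corollary \ref{cor:degeneracy embedding} applies. Since $F$ has no isolated vertices, $|V(F)| \leq 2m$, and since $N = Cm$ with $C$ large, we have $N^2 - 2N|V(F)| \geq N^2/2$. Hence
\[
\sqrt{\frac{N^2 - 2N|V(F)|}{48\,m}} \;\geq\; \frac{N}{\sqrt{96\,m}} \;>\; \frac{0.1\,N}{\sqrt{m}} \;>\; d,
\]
so Corollary \ref{cor:degeneracy embedding} yields a copy of $F$ in $\overbar{G}$, a contradiction. The second inequality $0.1\,N/\sqrt{m} \geq C_1 \sqrt{N}$ is pure arithmetic: since $N = Cm$, we have $0.1\,N/\sqrt{m} = 0.1\sqrt{C}\cdot\sqrt{N}$, which exceeds $C_1\sqrt{N}$ as soon as $C \geq 100\,C_1^{\,2}$, guaranteed by the choice $C_1 \ll C$.

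For Item 2, I would assume such an independent set $I$ exists and use it as the seed of an application of Lemma \ref{lem:greedy embedding}. Set $k := \min\{|I|, |V(F)|\}$ and let $A$ be the $k$ highest-degree vertices of $F$; any injection $\sigma' \colon A \to I$ is automatically an embedding of $F[A]$ into $\overbar{G}$ because $I$ is independent in $G$. If $k = |V(F)|$ we are already done, so suppose $k = |I|$. Each vertex of $\sigma'(A) \subseteq I$ has $G$-degree at most $C_1|I|$ by hypothesis, and $2d \leq 2C_1|I|$ since $|I| \geq d/C_1$. The right-hand side of \eqref{eq:greedy embedding} is therefore at most
\[
\frac{4m}{|I|+1}\cdot 2C_1 |I| \;+\; 2|V(F)| \;\leq\; 8\,C_1\,m + 4\,m,
\]
which is bounded by $N = Cm$ provided $C \gg C_1$. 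Lemma \ref{lem:greedy embedding} then produces a copy of $F$ in $\overbar{G}$, again a contradiction.

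There is no genuine obstacle; the only care required is the bookkeeping of the constants $h \ll C_0 \ll C_1 \ll C$, chosen so that the two inequalities above both go through simultaneously, and the elementary check that $|V(F)| \leq 2m$ suffices to make the quantities $N^2 - 2N|V(F)|$ and $N - 2|V(F)|$ behave like $N^2$ and $N$ respectively. Both items then follow from a single application of one of the preliminary embedding lemmas.
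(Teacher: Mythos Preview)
Your proposal is correct and follows essentially the same approach as the paper: Item 1 via Corollary \ref{cor:degeneracy embedding} (using $|V(F)| \leq 2m$ and $N = Cm$) and Item 2 via Lemma \ref{lem:greedy embedding} applied with the independent set $I$ as the seed. Your write-up actually spells out the arithmetic more carefully than the paper does, including the tight check $1/\sqrt{96} > 0.1$ and the explicit constant requirement $C \geq 100\,C_1^2$.
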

\begin{proof}
We show that if Item 1 or 2 does not hold, then $\overbar{G}$ contains a copy of $F$. For Item 1, 
if $d \leq 0.1N/\sqrt{m}$ then $\overbar{G}$ contains a copy of $F$ by Corollary \ref{cor:degeneracy embedding}, using that $|V(F)| \leq 2m$. The inequality $0.1N/\sqrt{m} \geq C_1\sqrt{N}$ in Item 1 holds because $C \gg C_1$, as $N = Cm$. 
Suppose now that Item 2 fails and let $I$ be as in that item. We apply Lemma \ref{lem:greedy embedding}. Take $A$ to be the set of $k = \max\{|I|,|V(F)|\}$ highest-degree vertices in $F$. Map $A$ arbitrarily into $I$; this is an embedding of $F[A]$ into $\overbar{G}$ because $I$ is independent in $G$. Let us verify \eqref{eq:greedy embedding} in Lemma \ref{lem:greedy embedding}. All vertices in $I$ have degree at most $C_1 \cdot |I|$. Also, the average degree $d$ of $G$ satisfies $d \leq C_1 \cdot |I|$.
Hence, assuming $A \neq V(F)$, the RHS in \eqref{eq:greedy embedding} is at most 
$\frac{4m}{|I|+1} \cdot 2C_1 \cdot |I| + 2|V(F)| \leq 8C_1m + 4m$. So \eqref{eq:greedy embedding} holds for $N = Cm$ if $C \gg C_1$. 
\end{proof}

Let $\mathcal{T}$ be the set of triangles in $G$. We run the following process. As long as there is an edge $e$ which is contained in at least one and at most $C_0$ triangles from $\mathcal{T}$, we delete from $\mathcal{T}$ all triangles containing $e$ (we do not make any changes to the graph, only to the collection $\mathcal{T}$). 
We say that $e$ is {\em eliminated} at this step. Note that at a given step of the process, a triangle (still) belongs to $\mathcal{T}$ if and only if none of its edges has been eliminated.
At the end of the process, we are left with a collection of triangles $\mathcal{T}_0$ with the property that if an edge is contained in a triangle from $\mathcal{T}_0$ then it is contained in at least $C_0$ such triangles. 
Let $G_-$ be the subgraph of $G$ consisting of all edges which are eliminated during the process. Let $G^+$ be the subgraph of the remaining edges, i.e. $E(G_+) = E(G) \setminus E(G_-)$. Note that $e \in E(G_+)$ if and only if $e$ is contained in some triangle from $\mathcal{T}_0$. 
Observe that:
\begin{enumerate}
    \item[(a)] For every $e \in E(G_+)$, there are at least $C_0$ triangles in $G_+$ containing $e$. 
     \item[(b)] For every set $B \subseteq E(G_-)$, there is $e \in B$ such that there are at most $C_0$ triangles in $G$ which contain $e$ and only use edges from $B \cup E(G_+)$.
    In particular, the total number of triangles in $G$ containing edges from $G_-$ is at most $C_0 \cdot e(G)$. 
\end{enumerate}
Property (a) holds by the definition of $G_+$ and the above-mentioned property of $\mathcal{T}_0$. 
For property (b), take $e$ to be the earliest eliminated edge among the edges in $B$. Before $e$ is eliminated, all triangles in $G$ which only use edges from $B \cup E(G_+)$ are still in $\mathcal{T}$, because none of the edges in $B \cup E(G_+)$ has been eliminated yet (the edges in $E(G_+)$ are never eliminated). At the moment that $e$ is eliminated, the number of triangles in $\mathcal{T}$ which contain $e$ must be at most $C_0$. 

In what follows, we denote by $N_+(x)$ the neighbourhood of $x$ in $G_+$, and by $N_+(x,y)$ the common neighbourhood of $x,y$ in $G_+$. When writing $N(x)$, we mean the neighbourhood in $G$.  
\begin{claim}\label{claim:subdivision paths}
For every edge $(x,y) \in E(G_+)$, there are paths $P^{x,y}_{1},\dots,P^{x,y}_{h}$ in $G_+$ of length $h$, all starting at $x$ and intersecting only at $x$, such that every vertex on these paths is adjacent to $y$.
\end{claim}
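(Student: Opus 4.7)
The plan is to build the $h$ paths one at a time, extending each one vertex by vertex, while maintaining the invariant that every new vertex lies in $N_+(y)$. The key observation is that property~(a) translates to the following quantitative statement: for every edge $(u,v) \in E(G_+)$ we have $|N_+(u,v)| \geq C_0$, since each of the at least $C_0$ triangles in $G_+$ through $(u,v)$ contributes a distinct vertex of $N_+(u,v)$.

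Having constructed $P^{x,y}_1,\dots,P^{x,y}_{i-1}$, I would build $P^{x,y}_i$ by setting $v_0 := x$ and selecting $v_1,v_2,\dots,v_h$ in order so that $v_j \in N_+(v_{j-1},y)$ and $v_j$ differs from $y$ and from every vertex used on the earlier paths or earlier in the current path. The edge $(v_{j-1},y)$ lies in $E(G_+)$: for $j=1$ this is the hypothesis $(x,y) \in E(G_+)$, and for $j \geq 2$ it follows inductively from the choice $v_{j-1} \in N_+(v_{j-2},y) \subseteq N_+(y)$. Hence $|N_+(v_{j-1},y)| \geq C_0$. At most $1 + h\cdot h$ vertices are forbidden ($y$ together with the at most $h^2$ vertices, other than $x$, appearing on the already-built or partially-built paths), and since $h \ll C_0$ we have $C_0 > h^2 + 1$, so a valid $v_j$ always exists. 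Setting $P^{x,y}_i := x\, v_1\, v_2\, \cdots\, v_h$ yields a path of length $h$ in $G_+$ whose every vertex is adjacent in $G_+$ to $y$, and which meets the earlier paths only at $x$.

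The main (and only) obstacle is the bookkeeping to ensure that the invariant ``$v_{j-1}$ is adjacent in $G_+$ to $y$'' is preserved, so that property~(a) applies at every step; this follows automatically from the way each $v_j$ is chosen as a common $G_+$-neighbour of $v_{j-1}$ and $y$. No use is made of the structure of $H$ beyond its vertex count $h$, and all bounds are comfortably absorbed by the choice $h \ll C_0$.
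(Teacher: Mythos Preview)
Your proposal is correct and follows essentially the same approach as the paper: the paper observes that $G_+[N_+(y)]$ has minimum degree at least $C_0$ (which is exactly your observation that $|N_+(u,y)| \geq C_0$ for every $u \in N_+(y)$) and then greedily grows the $h$ paths inside $N_+(y)$. Your write-up simply unrolls this greedy step more explicitly.
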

\begin{proof}
Put $N = N_+(y)$, so $x \in N$. 
Observe that by Property (a), the graph $G_+[N]$ has minimum degree at least $C_0$. Hence, one can greedily find the above paths $P^{x,y}_1,\dots,P^{x,y}_h$ inside $N = N_+(y) \subseteq N(y)$, provided that $C_0$ is large enough in terms of $h$.
\end{proof}
Using Claim \ref{claim:subdivision paths}, we can find a copy of $H$ in $G$ by finding copies of $H_1,H_2,H_3$ in $G_+$. 
Recall that $K_4^*$ is the subdivision of $K_4$ where exactly one edge is subdivided once.
\begin{claim}\label{claim:finding subdivision}
Let $K \in \{K_4,K_4^*,H_1,H_2,H_3\}$, and suppose that $H$ is a subdivision of $K$. For $e \in E(K)$, let $\ell_e$ be the length of the subdivision path in $H$ replacing $e$, and let $E_0 = \{e \in E(K) : \ell_e \geq 2\}$. 
If $G$ contains a copy of $K$ in which all edges from $E_0$ are in $G_+$, then $G$ contains a copy of $H$. 
\end{claim}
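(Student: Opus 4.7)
My plan is to keep the given copy of $K$ in $G$ and, for each edge $e \in E_0$, insert an internally-disjoint subdivision path of length $\ell_e$ between the corresponding endpoints; the remaining edges of $K$ (those with $\ell_e = 1$) are already present in $G$ and will be retained as is. I will denote by $\phi\colon V(K)\to V(G)$ the embedding of the $K$-copy, and maintain a set $U \subseteq V(G)$ of vertices used so far, starting with $U = \phi(V(K))$ and eventually growing to the full vertex set of the constructed $H$, so that $|U| \leq |V(H)| = h$ at every moment.

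The construction processes the edges of $E_0$ one by one. For a current $e = (u,w) \in E_0$ with $x := \phi(u)$ and $y := \phi(w)$, we have $(x,y) \in E(G_+)$ by hypothesis, and I will build a path $x = z_0, z_1, \ldots, z_{\ell_e-1}$ inside $N_+(y)$ greedily, mirroring the argument used in Claim \ref{claim:subdivision paths}. Given $z_{i-1} \in N_+(y)$, the edge $(z_{i-1}, y)$ lies in $E(G_+)$, so by Property (a) it is contained in at least $C_0$ triangles of $G_+$, i.e.\ $|N_+(z_{i-1}) \cap N_+(y)| \geq C_0$. Since at most $|U| + i \leq h$ of these common neighbours are forbidden (either already in $U$ or equal to some earlier $z_j$), and $h \ll C_0$, there is always a valid choice for $z_i$. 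After $\ell_e - 1$ steps the edge $(z_{\ell_e-1}, y) \in E(G_+) \subseteq E(G)$ closes off the path, producing a length-$\ell_e$ path in $G$ from $x$ to $y$ whose interior avoids $U$. I then add the new interior vertices to $U$ and continue with the next edge of $E_0$.

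Combining the subdivision paths with the unchanged image of $E(K) \setminus E_0$ yields a copy of $H$ in $G$. The only real obstacle is ensuring the greedy step never fails, which boils down to the constant hierarchy $h \ll C_0$ fixed at the very start of the proof; this guarantees at least $C_0 - h \geq 1$ options at each step, so no heavier machinery (dependent random choice, Lemma \ref{lem:AKS}, etc.) is needed. Property (a), which supplies a minimum degree of $C_0$ inside each common-neighbourhood graph $G_+[N_+(y)]$, does all the work. I do not even need to invoke Claim \ref{claim:subdivision paths} as a black box, since that claim's proof and the one here are essentially the same greedy procedure carried out with one extra bookkeeping constraint (avoiding $U$); trying instead to pre-select $h$ vertex-disjoint paths per edge and then pigeonhole is slightly cleaner-sounding but actually fragile, because a single bad vertex can block a whole path via an early appearance, so the direct greedy construction is the right formulation.
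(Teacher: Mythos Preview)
Your proof is correct and follows essentially the same approach as the paper: both build each subdivision path greedily inside $N_+(y)$ using Property~(a), the only difference being that the paper first packages the greedy step as Claim~\ref{claim:subdivision paths} (producing $h$ internally vertex-disjoint length-$h$ paths from $x$) and then pigeonholes, whereas you fold the avoidance of $U$ directly into a single greedy pass. Your aside that the pigeonhole variant is ``fragile'' is unwarranted, though: the $h$ paths of Claim~\ref{claim:subdivision paths} share only $x$, so each of the at most $h-1$ already-embedded vertices can block at most one of them, leaving a surviving path to shorten.
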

\begin{proof}
To obtain a copy of $H$, we need to replace the edges $e \in E_0$ with internally-disjoint paths of appropriate lengths. We find these paths one-by-one. Suppose that the path replacing the edge $(x,y) \in E_0$ needs to have length $k \leq h = |V(H)|$. We use the paths $P^{x,y}_i$ given by Claim \ref{claim:subdivision paths}. One of the paths $P^{x,y}_i$, $1 \leq i \leq h$, must be internally disjoint from all vertices embedded so far (because $|V(H)| = h$). Also, since all vertices of this path are adjacent to $y$, we can shorten it to a path that ends in $y$ and has length exactly $k$. This gives a copy of $H$.
\end{proof}

Note that $H_1,H_2,H_3$ are subdivisions of $K_4,K_4^*$, so $H$ is a subdivision of $K_4,K_4^*$ by transitivity. Hence, if $K \in \{K_4,K_4^*\}$, then the condition ``$H$ is a subdivision of $K$'' in Claim \ref{claim:finding subdivision} is always satisfied.

For $v \in V(G)$, let $b(v)$ denote the number of edges $(x,y) \in E(G_-)$ with $x,y \in N(v)$. By Property \nolinebreak (b),
\begin{equation}\label{eq:bad edges}
\sum_{v \in V(G)}{b(v)}  
\leq 3C_0 \cdot e(G).
\end{equation}
Indeed, the sum $\sum_{v \in V(G)}{b(v)}$ counts triangles which contain an edge of $G_-$, and each such triangle is counted at most 3 times. 
\begin{claim}\label{claim:good edges}
$e(G_+) \geq e(G)/2$. 
\end{claim}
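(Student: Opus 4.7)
The plan is to prove the claim by contradiction: assume that $e(G_-) > e(G)/2$, so the subgraph $G_-$ has average degree exceeding $d/2$, where $d = d(G)$. The aim is to produce an independent set in $G$ that violates Item 2 of Claim~\ref{claim:greedy embedding}, contradicting the hypothesis that $\overbar{G}$ contains no copy of $F$.

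The first step is to control the triangle count of $G_-$. Since each eliminated edge destroys at most $C_0$ alive triangles during the process, the number of triangles of $G$ containing at least one $G_-$-edge is at most $C_0 \cdot e(G_-)$, and consequently the same bound holds for the number of triangles of $G_-$ itself. Thus $G_-$ has $N$ vertices, average degree greater than $d/2$, and $O(C_0 dN)$ triangles. Applying Lemma~\ref{lem:AKS} to $G_-$ (so that $T/N = O(C_0 \cdot d_{G_-})$ inside the logarithmic correction) yields an independent set $I$ of $G_-$ of size $|I| = \Omega\bigl((N/d)\log d\bigr)$. Since $d \geq C_1\sqrt{N}$ by Claim~\ref{claim:greedy embedding}, this $I$ already satisfies $|I| \gg d/C_1$ when $N$ is large.

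The hardest step, which I expect to be the main obstacle, is converting $I$ (which is only independent in $G_-$) into an honest independent set in $G$ of size at least $d/C_1$ all of whose vertices have $G$-degree at most $C_1$ times that size, so as to contradict Item 2 of Claim~\ref{claim:greedy embedding}. I would first restrict $I$ to vertices of $G$-degree at most $2d$, which retains a constant fraction of $I$ by Markov. Next, using the assumption $e(G_+) \leq e(G)/2$, a uniform random subsample of the restricted $I$ of a suitable size has few $G_+$-edges in expectation; deleting one endpoint per such edge leaves an independent set of $G$ of the desired size and degree profile. Making the quantitative inequalities go through requires the hierarchy $C_0 \ll C_1 \ll C$ together with a careful choice of the subsampling rate, which should supply enough slack to convert the AKS-produced set into a violation of Claim~\ref{claim:greedy embedding}.
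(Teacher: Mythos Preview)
Your approach has a genuine gap at the step where you conclude $|I| \gg d/C_1$. The AKS bound applied to $G_-$ gives an independent set of size $\Omega\bigl((N/d_{G_-})\log d_{G_-}\bigr) = \Theta\bigl((N/d)\log d\bigr)$, since $d/2 < d_{G_-} \le d$. For this to exceed $d/C_1$ you would need $C_1 N\log d \gtrsim d^2$. But Item~1 of Claim~\ref{claim:greedy embedding} only provides the \emph{lower} bound $d \ge C_1\sqrt{N}$; nothing prevents $d$ from being, say, $N^{0.9}$ or even $\Theta(N)$. In such regimes $(N/d)\log d$ is $N^{o(1)}$, while $d/C_1$ is a power of $N$, so the desired inequality fails badly. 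The subsequent subsampling step inherits the same problem: even after restricting to vertices of degree $\le 2d$, the number of $G_+$-edges inside $I$ can be as large as $|I|\cdot d$, and removing them by subsampling forces the surviving set to have size $O(|I|/d)$, which is far below $d/C_1$.

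The paper avoids this obstacle by \emph{localizing} to a single neighbourhood rather than applying Lemma~\ref{lem:AKS} globally. For $v\in V(G)$ let $N'(v)=\{u:(u,v)\in E(G_-),\,d(u)\le d(v)\}$. An averaging argument (balancing $\sum_v d'(v)\ge e(G_-)\ge e(G)/2$ against $\sum_v b(v)\le 3C_0 e(G)$) produces a vertex $v$ with $d'(v)\ge d(v)/16,\,d/16$ and with at most $O(C_0)\,d'(v)$ edges of $G_-$ inside $N'(v)$. This already yields $A\subseteq N'(v)$ independent in $G_-$ with $|A|=\Omega(d'(v)/C_0)$. The key additional point is that Property~(b), applied with $B=\{(v,u):u\in S\}$ for any $S\subseteq A$, forces $G[A]$ (whose edges all lie in $G_+$) to be $C_0$-degenerate; hence $A$ contains a genuine $G$-independent set $I$ of size $\Omega(d'(v)/C_0^2)\ge d/C_1$, and every vertex of $I\subseteq N'(v)$ has $G$-degree at most $d(v)\le 16d'(v)\le C_1|I|$, contradicting Item~2 of Claim~\ref{claim:greedy embedding}. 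The essential idea missing from your proposal is this passage to a single $G_-$-neighbourhood, which simultaneously supplies the size $\Omega(d)$, the degree cap via $d(u)\le d(v)$, and the sparsity of both $G_-$ and $G_+$ inside it.
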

\begin{proof}
Suppose by contradiction that $e(G_+) \leq e(G)/2$, and hence $e(G_-) \geq e(G)/2$. For $v \in V(G)$, let $N'(v)$ be the set of vertices $u$ such that $(u,v) \in E(G_-)$ and $d(u) \leq d(v)$, and let $d'(v) = |N'(v)|$. Then $\sum_{v \in V(G)}{d'(v)} \geq e(G_-) \geq e(G)/2$. Observe that
$$
\sum_{v \in V(G)}\left( d'(v) - d(v)/16 - d/16 - \frac{1}{12C_0}\cdot b(v) \right) \geq e(G)/2 - e(G)/8 - e(G)/8 - \frac{1}{12C_0} \cdot \sum_{v \in V(G)}{b(v)} \geq 0,
$$
where the last inequality uses \eqref{eq:bad edges}. 
So there is $v \in V(G)$ such that $d'(v) \geq d(v)/16,d/16$ and $b(v) \leq 12C_0 \cdot d'(v)$. 
In particular, this means that there are at most $12C_0 \cdot d'(v)$ edges of $G_-$ inside $N'(v)$.
So the average degree of $G_-[N'(v)]$ is at most $24C_0$.
Hence, there is $A \subseteq N'(v)$ which is independent in $G_-$ with $|A| \geq d'(v)/(24C_0 + 1)$. Note that $(v,u) \in E(G_-)$ every $u \in A$, by the definition of $N'(v)$. 

We claim that the graph $G[A]$ is $C_0$-degenerate.
Indeed, given any $S \subseteq A$, apply Property (b) to the set of edges $B := \{(v,u) : u \in S\} \subseteq E(G_-)$, and let $e = (v,u) \in B$ be the edge given by Property (b). 
Since all edges in $G[S]$ belong to $G_+$, every edge of the form $(u,w) \in E(G[S])$ forms a triangle $u,v,w$ in which $(v,u),(v,w) \in B$ and $(u,w) \in E(G_+)$. By our choice of $e$, there are at most $C_0$ such triangles, so $d_{G[S]}(u) \leq C_0$, as required. 
It follows that $G[A]$ contains an independent set $I$ of size at least $|I| \geq \frac{|A|}{C_0 + 1} \geq \frac{d'(v)}{(C_0+1)(24C_0+1)} \geq d/C_1$, as $d'(v) \geq d/16$ (provided $C_1 \gg C_0$). Also, all vertices in $I \subseteq N'(v)$ have degree at most $d(v)$ in $G$, by the definition of $N'(v)$, and $d(v) \leq 16d'(v) \leq 16(C_0+1)(24C_0+1) \cdot |I| \leq C_1 \cdot |I|$.
But this contradicts Item 2 of Claim \ref{claim:greedy embedding}.
\end{proof}

\noindent
We now proceed by case analysis over the cases of $H_1,H_2,H_3$.

\paragraph{Case 1: $H$ is a subdivision of $H_1$.}
For $v \in V(G)$, denote by $N^*(v)$ the set of all $u \in V(G)$ such that $(u,v) \in E(G_+)$ and $d(u) \leq d(v)$, and let $d^*(v) = |N^*(v)|$. Then $\sum_{v \in V(G)}{d^*(v)} \geq |e(G_+)| \geq e(G)/2$, by Claim \ref{claim:good edges}.
Let $P(v)$ denote the set of pairs $(u,w) \in \binom{N^*(v)}{2}$ such that $|N_+(u,w)| \leq C_0$ (i.e., $u,w$ have at most $C_0$ common neighbours in $G_+$). 
Let $p(v) = |P(v)|$. Observe that
\begin{equation}\label{eq:p(v)}
\sum_{v \in V(G)}{p(v)} \leq C_0 \binom{N}{2},
\end{equation}
because each pair $(u,w)$ is counted at most $C_0$ times in the sum on the LHS of \eqref{eq:p(v)}, by the definition of the sets $P(v)$.
Now, note that
\begin{align*}
    &\sum_{v \in V(G)}{\left( d^*(v) - d(v)/16 - d/16 - \frac{1}{24C_0} \cdot b(v) - \frac{e(G)}{4C_0N^2} \cdot p(v) \right)} 
    = \\& 
    \sum_{v \in V(G)}{d^*(v)} - e(G)/4 - \frac{1}{24C_0} \cdot \sum_{v \in V(G)}{b(v)} - \frac{e(G)}{4C_0N^2} \cdot \sum_{v \in V(G)}{p(v)} 
    \geq 
    \\ &
    e(G)/2 - e(G)/4 - e(G)/8 - e(G)/8 = 0.
\end{align*}
Here we used \eqref{eq:bad edges} and \eqref{eq:p(v)}. 
So there is $v \in V(G)$ such that $d^*(v) \geq d(v)/16,d/16$, $b(v) \leq 24C_0 \cdot \nolinebreak d^*(v)$ \nolinebreak and 
\begin{equation}\label{eq:bad pairs in N(v)}
p(v) \leq \frac{4C_0N^2}{e(G)} \cdot d^*(v) = \frac{8C_0N}{d} \cdot d^*(v) \leq \frac{d^*(v) \cdot (d^*(v) - 3)}{8},
\end{equation}
where the last inequality holds if $C_1 \gg C_0$ because $d^*(v) \geq d/16 \geq C_1\sqrt{N}/16$, using Item 1 of Claim \nolinebreak \ref{claim:greedy embedding}. 

Let $A \subseteq N^*(v)$ be the set of all $u \in N^*(v)$ which participate in at most $\frac{d^*(v)-3}{2}$ of the pairs in $P(v)$. 
Then $p(v) \geq \frac{1}{2}(d^*(v) - |A|) \cdot \frac{d^*(v)-3}{2}$, so $|A| \geq d^*(v)/2$ by 
\eqref{eq:bad pairs in N(v)}. 
We claim that
$e(G[A]) > 48C_0 \cdot |A|$. Indeed, otherwise $G[A]$ would contain an independent set $I$ of size $|I| \geq \frac{|A|}{96C_0+1} \geq \frac{d^*(v)}{2(96C_0+1)} \geq \frac{d}{C_1}$, as $d^*(v) \geq d/16$ and $C_1 \gg C_0$. Also, $I \subseteq N^*(v)$ and all vertices in $N^*(v)$ have degree at most $d(v) \leq 16d^*(v) \leq C_1 \cdot |I|$ in $G$.
This would contradict Item 2 of Claim \ref{claim:greedy embedding}. 
So indeed $e(G[A]) > 48C_0 \cdot |A|$. On the other hand, $G[A]$ contains at most $b(v)$ edges of $G_-$ (by the definition of $b(v)$). 
Hence, $G[A]$ contains at least $e(G[A]) - b(v) > 48C_0 \cdot |A| - b(v) \geq 48C_0 \cdot d^*(v)/2 - b(v) \geq 0$ edges of $G_+$. 

Fix an edge $(u_1,u_2) \in E(G_+)$ with $u_1,u_2 \in A$. 
By the definition of $A$, each $u_i$ participates in at most $\frac{d^*(v)-3}{2}$ of the pairs in $P(v)$. Hence, there is 
$w \in N^*(v)$ different from $u_1, u_2$ such that $(u_1,w),(u_2,w) \notin P(v)$. By the definition of $P(v)$, this means that $u_i,w$ have at least $C_0$ common neighbours in $G_+$ for $i = 1,2$. Let $z_i$ be a common neighbour of $u_i,w$ in $G_+$, such that $v,u_1,u_2,w,z_1,z_2$ are all distinct. Then these six vertices form a copy of $H_1$ in $G_+$. 
Now, by Claim \ref{claim:finding subdivision}, $G$ contains a copy of $H$. This concludes the proof in Case 1.
\paragraph{Case 2: $H$ is a subdivision of $H_2$.} 
We have $\sum_v d_{+}(v) = 2e(G_+) \geq e(G)$, where the inequality is by Claim \ref{claim:good edges}. Sample two distinct vertices $v_1,v_2 \in V(G)$ uniformly at random and let $A = N_+(v_1,v_2)$ be the common neighbourhood of $v_1,v_2$ in $G_+$. For each $u \in V(G)$, the probability that $u \in A$ is 
$\binom{d_+(u)}{2}/{\binom{N}{2}}$.
By Jensen's inequality, 
\begin{align*}
\mathbb{E}[|A|] &= \frac{1}{{\binom{N}{2}}} \cdot \sum_{u}{\binom{d_+(u)}{2}} \geq 
\frac{N}{\binom{N}{2}} \cdot \binom{\frac{1}{N} \cdot \sum_{u}d_+(u)}{2} \geq 
\frac{N}{\binom{N}{2}} \cdot \binom{e(G)/N}{2} \\ &\geq 
\frac{2}{N} \cdot \frac{(e(G)/N)^2}{4} = \frac{e(G)^2}{2N^3} \geq C_0^2,
\end{align*}
where the last inequality holds because $e(G) = dN/2$, $d \geq C_1 \sqrt{N}$ by Item 1 of Claim \ref{claim:greedy embedding}, and $C_1 \gg C_0$.

Let $P$ be the set of pairs of vertices $u_1,u_2 \in A$ such that $|N_+(u_1,u_2)| \leq C_0$. For a given pair $u_1,u_2 \in V(G)$ with $|N_+(u_1,u_2)| \leq C_0$, the probability that $u_1,u_2 \in A$ is at most $\binom{C_0}{2}/\binom{N}{2}$. Hence, $\mathbb{E}[|P|] \leq \binom{C_0}{2}$. By linearity of expectation, $\mathbb{E}[|A| - |P|] \geq C_0^2/2 \geq 3$. Hence, there is a choice of $v_1,v_2$ for which $|A| - |P| \geq 3$. By removing one vertex from each pair in $P$, we obtain a subset $A' \subseteq A$, $|A'| \geq 3$, such that no pair of vertices in $A'$ belongs to $P$. Fix distinct $u_1,u_2,u_3 \in A'$. Since $(u_1,u_2) \notin P$, there are more than $C_0 \geq 4$ common neighbours of $u_1,u_2$ in $G_+$. Hence, there is $w \notin \{v_1,v_2,u_3\}$ which is a common neighbour of $u_1,u_2$ in $G_+$. Now, $v_1,v_2,u_1,u_2,u_3,w$ form a copy of $H_2$ in $G_+$.
So by Claim \ref{claim:finding subdivision}, $G$ contains a copy of $H$. This concludes the proof in Case 2.
\paragraph{Case 3: $H$ is a subdivision of $H_3$.}
If $H$ is obtained from $K_4$ by subdividing at least two edges (some number of times), then $H$ is a subdivision of $H_1$ or $H_2$; so such $H$ are already covered by Cases 1-2. Hence, we may assume that $H$ is obtained from $K_4$ by subdividing exactly one edge (some number of times). 
It follows, by Claim \ref{claim:finding subdivision}, that if $G$ contains a copy of $K_4$ in which at least one edge is in $G_+$, then $G$ contains a copy of $H$. 

Note that $H_3$ is obtained from $K_4$ by subdividing one edge twice. So $H_3$ has three {\em subdivision edges} (i.e., the edges of the path replacing the subdivided edge of $K_4$). Denote these edges by $e_1,e_2,e_3$. 
We can treat $H$ as a subdivision of $H_3$ in which only one of $e_1,e_2,e_3$ is subdivided. Indeed, if $e_i$ is replaced in $H$ by a path of length $\ell_i$, then we can replace just one of $e_1,e_2,e_3$ with a path of length $\ell_1+\ell_2+\ell_3$, and keep the other two edges. This means that if $G$ contains a copy of $H_3$ in which all edges except at most two of the edges $e_1,e_2,e_3$ are in $G_+$, then $G$ contains a copy of $H$ (again using Claim \ref{claim:finding subdivision}).

Recall that a diamond is the graph consisting of two triangles sharing an edge. 
A diamond has two vertices of degree $2$ and two vertices of degree $3$; the vertices of degree $2$ will be called the {\em tips} of the diamond, and the edge connecting the two vertices of degree $3$ will be called the {\em middle edge} of the diamond.

By Claim \ref{claim:good edges}, $e(G_+) \geq e(G)/2$. By Property (a) above, for every $e \in E(G_+)$ there are at least $C_0$ triangles in $G_+$ containing $e$. It follows that $G_+$ contains at least $e(G)/2 \cdot \binom{C_0}{2} \geq 4e(G)$ diamonds. 
For $v \in V(G)$, let $t(v)$ denote the number of diamonds $D$ in $G_+$ such that $v$ is a tip of $D$, and the other tip $u$ of $D$ satisfies $d(u) \leq d(v)$. Then $\sum_v {t(v)} \geq 4e(G)$. It follows that 
$$
\sum_{v \in V(G)}{\left( t(v) - d(v) - d \right)} = \sum_{v \in V(G)}{t(v)} - 4e(G) \geq 0.
$$
Hence, there is a vertex $v$ satisfying $t(v) \geq d(v), d$. Fix such a vertex $v$. By definition, there are diamonds $D_1,\dots,D_r$ in $G_+$, $r = t(v)$, such that $v$ is a tip of $D_i$, and the other tip $u_i$ of $D_i$ satisfies $d(u_i) \leq d(v)$ (for $i = 1,\dots,r$). 
Let $e_i$ be the middle edge of $D_i$. Suppose first that there is $1 \leq i \leq r$ such that $(v,u_i) \in E(G)$. Then the vertices of $D_i$ form a $K_4$ in which all edges except possibly $(v,u_i)$ are in $G_+$. As we saw above, this implies that $G$ contains a copy of $H$, completing the proof. So from now on we may assume that $v$ is not connected to any $u_i$. It follows that $\{u_1,\dots,u_r\} \cap \bigcup_{i = 1}^{r}{e_i} = \emptyset$, since $v$ is connected with to all vertices of $\bigcup_{i = 1}^{r}{e_i}$. Next, suppose that there are $1 \leq i < j \leq r$ such that $u_i = u_j$. 
Since $D_i \neq D_j$, there is $x_j \in e_j$ such that $x_j \notin V(D_i)$. Observe that $V(D_i) \cup \{x_j\}$ spans a copy of $K_4^*$ in $G_+$. By Claim \ref{claim:finding subdivision}, this implies that $G$ contains a copy of $H$, completing the proof. 
So from now on we may assume that $u_1,\dots,u_r$ are pairwise distinct. 

We claim that $U := \{u_1,\dots,u_r\}$ is not an independent set of $G$. Indeed, observe that $|U| = r = t(v) \geq d$ and all vertices in $U$ have degree at most $d(v) \leq t(v) = |U|$. So if $U$ were independent then we would get a contradiction to Item 2 of Claim \ref{claim:greedy embedding}.
Let us then fix $1 \leq i < j \leq r$ such that $(u_i,u_j) \in E(G)$. If $e_i = e_j$ then $e_i \cup \{u_i,u_j\}$ spans a copy of $K_4$ in which all edges except possibly $(u_i,u_j)$ are in $G_+$. Again, this implies that $G$ contains a copy of $H$. Suppose finally that $e_i \neq e_j$. Let $x_j \in e_j \setminus V(D_i)$. 
It is easy to check that $V(D_i) \cup \{x_j,u_j\}$ contains a copy of $H_3$ in which all edges except possibly $(u_i,u_j)$ are in $G_+$, and $(u_i,u_j)$ plays the role of one of the subdivision edges $e_1,e_2,e_3$ 
(the edges playing the roles of $e_1,e_2,e_3$ in this copy are $(v,x_j),(x_j,u_j),(u_j,u_i)$). 
As explained above, this implies that $G$ contains a copy of $H$. This completes the proof of Case 3 and hence the theorem. 
\end{proof}

\section{On Conjecture \ref{conj:e-v}}\label{sec:e-v}
This section is broken into several parts. First, we prove Proposition \ref{prop:tw}, which then allows us to prove Theorem \ref{thm:v-e, k=3}. Next, we show that Conjecture \ref{conj:e-v} holds if $K_n$ is replaced by $K_{n,n}$ (see Proposition \ref{prop:e-v, complete bipartite}). To that end, we also study the Ramsey number $R(H,K_{n,n})$ for graphs $H$ of bounded maximum degree and for degenerate graphs (see Section \ref{subsec:H vs complete bipartite}). 

\subsection{Proof of Proposition \ref{prop:tw}}\label{subsec:tw}

Here we prove Proposition \ref{prop:tw}, which bounds the Ramsey number $R(H,K_n)$ in terms of the treewidth of $H$. We refer to \cite{HW} for the basic definitions related to treewidth. 
We will need the following lemma. For a graph $G$, let $\#K_r(G)$ denote the number of $r$-cliques in $G$. 

\begin{lemma}\label{lem:r-cliques}
For any $r \geq 1$ there is $C_r > 0$ such that the following holds. If $G$ is a graph on $N \geq C_r n^r$ vertices with no independent set of size $n$, then $\#K_{r+1}(G) \geq \frac{N}{C_rn^r} \cdot \#K_r(G)$.
\end{lemma}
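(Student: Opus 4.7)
The plan is to prove the lemma by induction on $r$. The base case $r = 1$ reduces to showing that $e(G) \geq \frac{N^2}{C_1 n}$, which follows from Tur\'{a}n's theorem applied to the complement $\overline{G}$: since $\overline{G}$ contains no $K_n$, it has at most $(1-\frac{1}{n-1})\binom{N}{2}$ edges, hence $e(G) \geq \binom{N}{2}/(n-1) \geq N^2/(4n)$ for $N \geq 2n$, and since $\#K_1(G)=N$ this gives the lemma with $C_1 = 4$.

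For the inductive step I would use the double-counting identities
\[
\sum_L |N(L)| = r\cdot \#K_r(G), \qquad \sum_L e(G[N(L)]) = \binom{r+1}{2} \cdot \#K_{r+1}(G),
\]
where $L$ runs over $(r-1)$-cliques of $G$ and $N(L)$ denotes their common neighbourhood. Since $G$ has no independent set of size $n$, neither does any induced subgraph $G[N(L)]$, so Tur\'{a}n applied inside each common neighbourhood with $|N(L)| \geq 2n$ yields $e(G[N(L)]) \geq |N(L)|^2/(4n)$. Summing only over these ``large'' $L$ and applying Cauchy--Schwarz (there are at most $\#K_{r-1}(G)$ terms) gives
\[
\binom{r+1}{2} \#K_{r+1}(G) \geq \frac{1}{4n}\sum_{L:\,|N(L)|\geq 2n}|N(L)|^2 \geq \frac{\bigl(\sum_{L:\,|N(L)|\geq 2n}|N(L)|\bigr)^2}{4n \cdot \#K_{r-1}(G)}.
\]

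The main obstacle is to show that most of the mass in $\sum_L|N(L)| = r\cdot \#K_r(G)$ actually comes from cliques $L$ with $|N(L)| \geq 2n$, which is precisely where the inductive hypothesis (together with the size assumption $N \geq C_r n^r$) is essential. The contribution of small-$|N(L)|$ terms is at most $2n\cdot \#K_{r-1}(G)$, and the induction hypothesis applied to $r-1$ gives $\#K_{r-1}(G) \leq \frac{C_{r-1}n^{r-1}}{N}\#K_r(G)$, so this contribution is bounded by $\frac{2C_{r-1}n^r}{N}\#K_r(G) \leq \frac{r}{2}\#K_r(G)$ once $N \geq \frac{4C_{r-1}}{r}n^r$. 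Hence $\sum_{L:\,|N(L)|\geq 2n}|N(L)| \geq \frac{r}{2}\#K_r(G)$, and substituting this along with the same upper bound on $\#K_{r-1}(G)$ into the previous display yields
\[
\#K_{r+1}(G) \geq \frac{r^2 \#K_r(G)^2}{16\binom{r+1}{2}\, n \cdot \#K_{r-1}(G)} \geq \frac{r}{8(r+1)C_{r-1}} \cdot \frac{N\,\#K_r(G)}{n^r},
\]
so choosing $C_r = 8(r+1)C_{r-1}/r$ (and enlarging it if needed so that $N \geq C_r n^r$ also forces $N \geq 4 C_{r-1} n^r/r$) completes the induction.
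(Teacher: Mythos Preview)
Your proof is correct and follows essentially the same inductive strategy as the paper: sum over $(r-1)$-cliques $L$, apply the base case inside each common neighbourhood $N(L)$, and use the induction hypothesis to relate $\#K_{r-1}(G)$ to $\#K_r(G)$. The only technical difference is that the paper sets the threshold for ``large'' $L$ at $|N(L)| \geq \frac{r}{2}\cdot\frac{N}{C_{r-1}n^{r-1}}$ and then uses the pointwise bound $|N(L)|^2 \geq \text{(threshold)}\cdot|N(L)|$, whereas you set the threshold at $2n$ and use Cauchy--Schwarz together with a second application of the induction hypothesis; this even yields the slightly smaller recursion $C_r = 8(r+1)C_{r-1}/r$ versus the paper's $C_r = 8(r+1)C_{r-1}$.
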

\begin{proof}
We will show that one can take $C_1 = 4$ and $C_r = 8(r+1) \cdot C_{r-1}$ for $r \geq 2$. (We make no effort of optimising the value of $C_r$.)
The proof is by induction on $r$. Suppose first $r = 1$. Let $d$ be the average degree of $G$. 
We have $n > \alpha(G) \geq \frac{N}{d + 1}$, and hence $\frac{2e(G)}{N} = d > \frac{N}{n} - 1 \geq \frac{N}{2n}$. It follows that $e(G) \geq \frac{N}{4n} \cdot N$, as required. 

Let now $r \geq 2$, and let $G$ be as in the statement of the lemma. By the induction hypothesis, we know that 
$\#K_r(G) \geq \frac{N}{C_{r-1} n^{r-1}} \cdot \#K_{r-1}(G)$. Let $\mathcal{C}$ be the set of all $(r-1)$-cliques $X$ in $G$ such that the number of $r$-cliques containing $X$ is at least $\frac{r}{2} \cdot \frac{N}{C_{r-1}n^{r-1}}$. 
Observe that the number of $r$-cliques which do not contain any $(r-1)$-clique from $\mathcal{C}$ is at most 
$$\frac{1}{r} \cdot \#K_{r-1}(G) \cdot \frac{r}{2} \cdot \frac{N}{C_{r-1}n^{r-1}} \leq \frac{1}{2} \cdot \#K_r(G).$$ 
Hence there are at least $\frac{1}{2} \cdot \#K_r(G)$ $r$-cliques which contain some $(r-1)$-clique from $\mathcal{C}$. 

For each $X \in \mathcal{C}$, let $N(X)$ be the set of vertices $y$ such that $X \cup \{y\}$ is an $r$-clique. By definition, 
$$|N(X)| \geq \frac{r}{2} \cdot \frac{N}{C_{r-1}n^{r-1}} \geq 4n.$$ By the case $r=1$ of the lemma, applied to the graph $G[N(X)]$, we have $e(N(X)) \geq \frac{|N(X)|^2}{4n}$. 
Summing over all $X \in \mathcal{C}$, we see that
\begin{align*}
\binom{r+1}{2} \cdot \#K_{r+1}(G) &\geq \sum_{X \in \mathcal{C}}{e(N(X))} \geq 
\sum_{X \in \mathcal{C}}{\frac{|N(X)|^2}{4n}} 
\geq \frac{1}{4n} \cdot \frac{r}{2} \cdot \frac{N}{C_{r-1}n^{r-1}} \cdot \sum_{X \in \mathcal{C}}{|N(X)|} \\ &\geq
\frac{r}{8} \cdot \frac{N}{C_{r-1}n^r} \cdot \frac{\#K_r(G)}{2} = \binom{r+1}{2} \cdot \frac{N}{C_rn^r} \cdot \#K_r(G).
\end{align*}
\end{proof}

In the proof of Proposition \ref{prop:tw}, it is convenient to work with a tree-decomposition of $H$ in which all bags have size $\text{tw}(H) + 1$, and every two adjacent bags intersect in $\text{tw}(H)$ vertices. It is well-known that such a tree decomposition always exists, see e.g. \cite[Lemma 2]{HW}. 
\begin{lemma}[\cite{HW}]\label{lem:k-tree}
Let $H$ be a graph with $r := \textrm{tw}(H)$. Then there is a tree-decomposition of $H$ in which every bag has size $r+1$ and every two adjacent bags intersect in $r$ vertices. 
\end{lemma}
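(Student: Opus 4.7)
The plan is to start from an arbitrary tree-decomposition $(T, \{B_t\}_{t \in V(T)})$ of $H$ of width $r$ and normalize it in three stages: first, enlarge every bag to size exactly $r+1$; second, contract adjacent equal bags; third, subdivide each remaining edge so that every pair of adjacent bags shares exactly $r$ vertices. A minor preliminary is that $\text{tw}(H) = r$ forces $|V(H)| \geq r+1$ (otherwise $H$ would be a subgraph of $K_r$, an $(r-1)$-tree), so some bag in the initial decomposition already has size $r+1$.

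For the first stage, I would root $T$ at a bag of size $r+1$ and process the remaining bags in BFS order. When visiting a bag $B_t$ with $|B_t| < r+1$ whose already-processed parent $B_{t'}$ has size $r+1$, one checks $|B_{t'} \setminus B_t| \geq (r+1)-|B_t|$, so I can add enough vertices of $B_{t'} \setminus B_t$ to $B_t$ to bring $|B_t|$ up to $r+1$. Adding such a vertex $v \in B_{t'}$ to the adjacent bag $B_t$ merely extends the connected subtree $\{s : v \in B_s\}$ by the adjacent node $t$, so the tree-decomposition axioms are preserved.

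For the second stage, as long as there are adjacent bags with $B_t = B_{t'}$ I would contract the edge $tt'$ (delete $t'$ and reattach its remaining neighbors to $t$), which trivially preserves all three tree-decomposition properties. After iterating, every bag has size $r+1$ and adjacent bags are distinct, so they share at most $r$ vertices.

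For the third stage, take any edge $tt'$ with $|B_t \cap B_{t'}| = r+1-k$ for some $2 \le k \le r+1$. Writing $A := B_t \cap B_{t'}$, $B_t = A \cup \{x_1, \ldots, x_k\}$, and $B_{t'} = A \cup \{y_1, \ldots, y_k\}$, I will replace the edge $tt'$ with a path $t = s_0 - s_1 - \cdots - s_k = t'$ whose intermediate bags are $B_{s_i} := A \cup \{y_1, \ldots, y_i\} \cup \{x_{i+1}, \ldots, x_k\}$ for $1 \le i \le k-1$. Every new bag still has size $r+1$, and consecutive bags differ only by the swap $x_i \leftrightarrow y_i$, hence share exactly $r$ vertices. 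The one thing to verify is that every vertex still occupies a connected subtree: vertices of $A$ lie in every intermediate bag and simply absorb the new path; each $x_i$ appears in exactly $s_0, \ldots, s_{i-1}$, extending its original subtree (which, since $x_i \notin B_{t'}$, lay entirely on the $t$-side of the deleted edge) by a consecutive chain of adjacent nodes; the case of $y_i$ is symmetric. The main obstacle is precisely this connectivity bookkeeping in the subdivision step, but it is routine once the interpolation between $B_t$ and $B_{t'}$ is set up carefully.
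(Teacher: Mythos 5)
The paper does not prove this lemma; it simply cites \cite[Lemma~2]{HW}, so there is no in-paper argument to compare against. Your self-contained proof via the standard three-stage normalization (enlarge each bag to size $r+1$ by pulling vertices down from an already-full parent, contract adjacent equal bags, then subdivide each remaining edge with an interpolating sequence of bags) is correct. In particular, the connectivity bookkeeping in the subdivision step is right: each $x_i$'s original subtree lies entirely on the $t$-side of the deleted edge and is extended by the consecutive prefix $s_0,\dots,s_{i-1}$, and symmetrically for $y_i$, while vertices of $A=B_t\cap B_{t'}$ absorb the whole path. The argument also handles the case $A=\emptyset$ (which can occur when $H$ is disconnected) without modification, and it terminates since each stage is a finite pass. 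One cosmetic remark: the observation that $|V(H)|\ge r+1$ is a slight detour — the existence of a bag of size $r+1$ follows directly from taking an optimal tree-decomposition, whose width is $r$ by definition — but nothing hinges on it.
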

\noindent
We are now ready to prove Proposition \ref{prop:tw}, which we restate here for convenience. 
\restateproptw*
\begin{proof}
Put $r := \textrm{tw}(H)$. Take a tree-decomposition of $H$ with the properties guaranteed in Lemma \ref{lem:k-tree}; let $T$ be the corresponding tree, and let $B(t)$ be the bag corresponding to $t \in V(T)$. Let $H'$ be the graph obtained by making each bag $B(t)$ a clique; so $V(H') = V(H)$ and $H'$ contains $H$ as a subgraph.
We will show that $R(H',K_n) = O(n^r)$. 
Let $G$ be a graph on $N = Cn^r$ vertices with no independent set of size $n$. By Lemma \ref{lem:r-cliques}, if $C$ is large enough then $\#K_{r+1}(G) > (v(H) - \nolinebreak r - \nolinebreak 1) \cdot \nolinebreak \#K_r(G)$. We now run the following process with sets $\mathcal{C}_{r+1}, \mathcal{C}_r$. Initialize $\mathcal{C}_{r+1}$ to be the set of all $(r+1)$-cliques in $G$, and $\mathcal{C}_r$ to be the set of all $r$-cliques in $G$. As long as there is $X \in \mathcal{C}_r$ such that the number of $Y \in \mathcal{C}_{r+1}$ containing $X$ is at most $v(H) - r - 1$, delete $X$ from $\mathcal{C}_r$ and delete all such $Y$ from $\mathcal{C}_{r+1}$. 
The number of elements of $\mathcal{C}_{r+1}$ deleted throughout the process is at most $\#K_r(G) \cdot (v(H) - r - 1) < \#K_{r+1}(G)$. Hence, the terminal set $\mathcal{C}_{r+1}$ is non-empty. By construction, this set has the property that for every $Y \in \mathcal{C}_{r+1}$ and every $X \subseteq Y$, $|X| = r$, there are at least $v(H) - r$ sets $Y' \in \mathcal{C}_{r+1}$ which contain $X$.

Fix an order $t_1,\dots,t_m$ of $V(T)$ such that $t_i$ has exactly one neighbour in $\{t_1,\dots,t_{i-1}\}$. We now embed $B(t_1),\dots,B(t_m)$ one-by-one, such that the image of each $B(t_i)$ equals some $Y_i \in \mathcal{C}_{r+1}$. Fix an arbitrary $Y_1 \in \mathcal{C}_{r+1}$ and embed $B(t_1)$ onto $Y_1$. For $i \geq 2$, suppose that we already embedded $B(t_1),\dots,B(t_{i-1})$. There is a unique $1 \leq j \leq i-1$ such that $t_j$ is a neighbour of $t_i$. By the definition of tree-decomposition, we have $B(t_i) \cap (B(t_1) \cup \dots \cup B(t_{i-1})) = B(t_i) \cap B(t_j)$. 
By our choice of the tree-decomposition and of $H'$, the intersection $B(t_i) \cap B(t_j)$ is an $r$-clique. Hence, there is a unique vertex $v \in B(t_i) \setminus B(t_j)$. 
Let $X \subseteq Y_j$ be the $r$-clique playing the role of $B(t_i) \cap B(t_j)$. 
There are at least $v(H) - r$ different $(r+1)$-cliques $Y \in \mathcal{C}_{r+1}$ containing $X$; hence for one of these $Y$, the (unique) vertex in $Y \setminus X$ is ``new'', i.e. not contained in $Y_1 \cup \dots \cup Y_{i-1}$. We can now embed $B(t_i)$ onto $Y_i := Y$, mapping $v$ to this new vertex. This completes the proof.  
\end{proof}

\subsection{Proof of Theorem \ref{thm:v-e, k=3}}
The following lemma, appearing in \cite{EFRS}, allows us to assume that $H$ is $2$-connected. For completeness, we include a proof. 
\begin{lemma}[\cite{EFRS}]\label{lem:2-connected}
Let $H$ be a graph obtained from graphs $H_1,H_2$ by gluing them together along a vertex. Then for every graph $F$, $R(H,F) = O\left( R(H_1,F) + R(H_2,F) \right)$.
\end{lemma}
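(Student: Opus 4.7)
The plan is to combine a greedy extraction with two applications of the defining property of $R(\cdot,F)$. Write $N_1 := R(H_1,F)$ and $N_2 := R(H_2,F)$, and let $v$ denote the vertex along which $H_1$ and $H_2$ are glued to form $H$. I will show that every graph $G$ on $N := v(H_2) \cdot N_1 + N_2$ vertices whose complement $\overbar{G}$ is $F$-free must contain a copy of $H$; since $v(H_2) \leq v(H)$ is a constant depending only on the fixed graph $H$, this will give $R(H,F) \leq v(H) \cdot (N_1 + N_2) = O(N_1 + N_2)$.

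The first step will be to greedily extract a maximal family $C_1,\ldots,C_k$ of pairwise vertex-disjoint copies of $H_2$ in $G$. By maximality, the induced subgraph $G' := G - \bigcup_{i=1}^{k} V(C_i)$ contains no copy of $H_2$; since $\overbar{G'}$ is an induced subgraph of $\overbar{G}$ and hence still $F$-free, the definition of $N_2$ forces $|V(G')| < N_2$. Consequently $k \cdot v(H_2) > N - N_2 = v(H_2) \cdot N_1$, so that $k \geq N_1$.

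The second step will exploit this large family of ``anchors''. For each $i$, let $v^{(i)} \in V(C_i)$ denote the vertex playing the role of $v$ in the copy $C_i$, and set $U := \{v^{(1)},\ldots,v^{(k)}\}$, so that $|U| = k \geq N_1$. Since $\overbar{G[U]}$ is still $F$-free, the definition of $N_1$ produces a copy of $H_1$ inside $G[U]$. Let $u$ be the vertex of this copy that plays the role of $v$ in $H_1$, and pick the index $i^*$ with $u = v^{(i^*)}$.

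Finally, because the $C_j$'s are pairwise vertex-disjoint and $V(H_1) \subseteq U$, every vertex of $V(C_{i^*}) \setminus \{v^{(i^*)}\}$ lies outside $V(H_1)$, and hence $V(H_1) \cap V(C_{i^*}) = \{u\}$. Thus the copy of $H_1$ found inside $G[U]$ and the copy $C_{i^*}$ of $H_2$ meet precisely at $u$, and their union realises the desired copy of $H$ in $G$. There is no single deep step here; the main point requiring care is exactly this last disjointness bookkeeping, because without the containment $V(H_1) \subseteq U$ one could not rule out that the copy of $H_1$ accidentally passes through internal vertices of some $C_j$, spoiling the gluing. By symmetry, one could equally well run the argument with the roles of $H_1$ and $H_2$ swapped, which is reassuring but unnecessary for the stated $O(\cdot)$ bound.
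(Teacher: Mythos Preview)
Your proof is correct and follows essentially the same approach as the paper's: find many pairwise vertex-disjoint copies of one of the two pieces, collect the ``glue'' vertices, and use the Ramsey property again on that set to find the other piece. The only cosmetic differences are that the paper swaps the roles of $H_1$ and $H_2$ and extracts the disjoint copies iteratively (taking $N=(|V(H_1)|+1)\cdot\max\{R(H_1,F),R(H_2,F)\}$) rather than via a maximal family, but the underlying argument and the disjointness bookkeeping are identical.
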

\begin{proof}
    Put $M = \max\{ R(H_1,F), R(H_2,F) \}$, and let $G$ be a graph on $N = (|V(H_1)| + 1) \cdot M$ vertices such that $\overbar{G}$ contains no copy of $F$. We can then find in $G$ vertex-disjoint copies $H_1^{(1)},\dots,H_1^{(M)}$ of $H_1$. Let $v$ be the unique common vertex of $H_1$ and $H_2$. For $i = 1,\dots,M$, let $v^{(i)}$ be the vertex of $H_1^{(i)}$ playing the role of $v$. The subgraph of $G$ induced on $\{v^{(1)},\dots,v^{(M)}\}$ contains a copy $H'_2$ of $H_2$. Let $1 \leq i \leq M$ such that $v^{(i)}$ plays the role of $v$ in $H'_2$. Then $H_1^{(i)} \cup H'_2$ form a copy of $H$.  
\end{proof}
\begin{corollary}[\cite{EFRS}]\label{cor:2-connected}
Let $H$ be a graph with biconnected components $H_1,\dots,H_m$. Then for every graph $F$, $R(H,F) = O(R(H_1,F) + \dots + R(H_m,F))$.
\end{corollary}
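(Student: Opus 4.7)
The plan is a direct induction on the number $m$ of biconnected components, using Lemma \ref{lem:2-connected} to peel off one block at a time. The base case $m=1$ is vacuous (take $H = H_1$). For the inductive step I will use the standard \emph{block tree} of $H$, whose nodes are the blocks $H_1,\dots,H_m$ together with the cut vertices of $H$, with an edge between a block and each cut vertex it contains. Since the block tree is a tree with at least two leaves whenever $m \geq 2$, I can pick (after relabelling) a block $H_m$ that is a leaf of the block tree, and such a leaf block contains exactly one cut vertex of $H$.

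Call this cut vertex $v$ and set $H' := H - (V(H_m)\setminus\{v\})$. Because $v$ is the only vertex of $H_m$ that is shared with any other block, $H'$ is precisely the union $H_1 \cup \dots \cup H_{m-1}$ and these are exactly its biconnected components. Moreover $H$ is obtained from $H'$ and $H_m$ by identifying them along the single vertex $v$, which puts us in the setup of Lemma \ref{lem:2-connected}.

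Applying the inductive hypothesis to $H'$ gives $R(H',F) = O\!\left( R(H_1,F) + \dots + R(H_{m-1},F) \right)$, and then Lemma \ref{lem:2-connected} yields
\[
R(H,F) \;=\; O\!\left( R(H',F) + R(H_m,F) \right) \;=\; O\!\left( R(H_1,F) + \dots + R(H_m,F) \right),
\]
as required. The constants hidden in the $O$-notation depend only on $|V(H)|$ (via the factor $|V(H')|+1$ appearing in the proof of Lemma \ref{lem:2-connected}), which is fine because $H$ is fixed.

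I do not anticipate any real obstacle: the only non-trivial ingredient is the existence of a block with a unique cut vertex whenever $m \geq 2$, which follows immediately from the fact that the block tree is a tree. If one preferred to avoid referencing the block tree explicitly, one could instead induct on $m$ by locating any cut vertex of $H$, splitting $H$ along it into two edge-disjoint subgraphs meeting only at that vertex, and combining Lemma \ref{lem:2-connected} with the observation that the biconnected components of each part form a subset of $\{H_1,\dots,H_m\}$.
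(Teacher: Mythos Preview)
Your argument is correct. The paper actually gives no proof of this corollary at all; it is stated immediately after Lemma~\ref{lem:2-connected} and treated as self-evident, so your induction via the block tree (peeling off a leaf block and applying Lemma~\ref{lem:2-connected}) is precisely the intended justification spelled out in full.
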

\begin{proof}[Proof of Theorem \ref{thm:v-e, k=3}]
    By Corollary \ref{cor:2-connected}, we may assume that $H$ is $2$-connected. Indeed, let $H_1,\dots,H_m$ be the biconnected components of $H$. By Corollary \ref{cor:2-connected}, it is enough to prove that $R(H_i,K_n) = O(n^3)$ for every $i = 1,\dots,m$. Also, $e(H_i) - v(H_i) \leq e(H) - v(H) \leq 4$, because $H$ is connected. Hence, from now on we assume that $H$ is $2$-connected. 
    
    Suppose first that $v(H) \leq 5$. In this case we show that $\text{tw}(H) \leq 3$, which would imply that $R(H,K_n) = O(n^3)$ by Proposition \ref{prop:tw}. If $v(H) = 4$ then $\text{tw}(H) \leq \text{tw}(K_4) = 3$. If $v(H) = 5$ then $e(H) \leq v(H) + 4 = 9$, so $H$ is contained in $K_5 - e$. Note that $K_5 - e$ is obtained by gluing two copies of $K_4$ along a triangle. It is now easy to see that $\text{tw}(K_5 - e) \leq 3$, as required. 
    
    For the rest of the proof, suppose that $v(H) \geq 6$. If $\Delta(H) \leq 2$ then $H$ is a cycle or a path, and it is well-known that in this case $R(H,K_n) = O(n^2)$ (for example, this follows from the case $k=2$ of Conjecture \ref{conj:e-v}, which was proved in \cite{EFRS}). 
    Let $v \in V(H)$ be a vertex of maximum degree, $d_H(v) \geq 3$. Let $H' = H - v$. Note that $H'$ is connected because $H$ is $2$-connected. Also,
    $e(H') - v(H') = (e(H) - d_H(v)) - (v(H) - 1) = e(H) - v(H) - d_H(v) + 1 \leq 5 - d_H(v)$.
    
    We claim that 
    $R(H',K_n) = O(n^2)$. If $d_H(v) \geq 4$ then $e(H') - v(H') \leq 1$, so $R(H',K_n) = O(n^2)$ follows from the case $k = 2$ of Conjecture \ref{conj:e-v}, which was proved in \cite{EFRS}. Suppose now that $d_H(v) = 3$, so $e(H') - v(H') \leq 2$. 
    If $\text{tw}(H') \leq 2$ then $R(H',K_n) = O(n^2)$ by Proposition \ref{prop:tw}, so suppose that $\text{tw}(H') > 2$. It is known (see e.g. \cite{Bodlaender}) that a graph has treewidth larger than $2$ if and only if it contains a subdivision of $K_4$. 
    So $H'$ contains a subdivision $S$ of $K_4$. Observe that $e(S) - v(S) = 2$ (this holds for every subdivision of $K_4$). This implies that $e(H') - v(H') = 2$, and that every 2-connected component of $H'$ other than $S$ is a singleton (this can also be stated as saying that the $2$-core of $H'$ is $S$). It now follows from Corollary \ref{cor:2-connected} that $R(H',K_n) = O(R(S,K_n))$. Now, if $v(S) \geq 6$ then by Theorem \ref{thm:K4 subdivisions} we have $R(S,K_n) = O(n^2)$ and hence $R(H',K_n) = O(n^2)$. So suppose that $v(S) \leq 5$. If $v(S) = 4$, namely $S \cong K_4$, then, since $H$ is connected and has maximum degree 3, it holds that $H \cong K_4$, in contradiction to $v(H) \geq 6$. If $v(S) = 5$ then $S \cong K_4^*$. Note that $K_4^*$ has four vertices of degree 3 and one vertex of degree 2. Let $u$ be this vertex of degree $2$ in $S$. We have $V(H) \setminus V(S) \neq \emptyset$ because $v(H) \geq 6$. Also, there are no edges in $H$ between $V(S) \setminus \{u\}$ and $V(H) \setminus V(S)$, because the vertices in $V(S) \setminus\{u\}$ have degree 3 in $S$ and $\Delta(H) = 3$. So $u$ is a cut vertex of $H$, in contradiction to the fact that $H$ is $2$-connected. This proves that $R(H',K_n) = O(n^2)$.
    
    Now let $G$ be a graph on $N = Cn^3$ vertices with no independent set of size $n$. There exists $x \in V(G)$ with $d(x) \geq Cn^2 - 1$. By choosing $C$ large enough, we can make sure that $d(x) \geq R(H',K_n)$. Then, $G[N(x)]$ contains a copy of $H'$. Together with $x$, we get a copy of $H$, as required.
\end{proof}

\subsection{On the Ramsey number $R(H,K_{n,n})$}\label{subsec:H vs complete bipartite}
We begin by proving upper bounds on $R(H,K_{n,n})$ for graphs $H$ with $\Delta(H) = r$ and for $r$-degenerate $H$. Both of our results follow from Lemma \ref{lem:strongly degenerate} below. First, we need the following definition.  
\begin{definition}
    We say that a graph $H$ is \emph{$r$-strongly-degenerate} if there exists an ordering $v_1, \dots, v_h$ of its vertices such that for all $i \in [h]$ one of the following holds:
    \begin{enumerate}[label=\alph*)]
        \item $|N(v_i) \cap \{v_1, \dots, v_{i-1}\}| \le r-1,$ or
        \item $d(v_i) \le r.$
    \end{enumerate}
\end{definition}

Equivalently, a graph $H$ is $r$-strongly-degenerate if its subgraph induced by the set of vertices with degree larger than $r$ is $(r-1)$-degenerate.

\begin{lemma}\label{lem:strongly degenerate}
    For any $r$-strongly-degenerate graph $H$ on $h$ vertices, $R(H, K_{n,n}) \le h^2 n^r.$
\end{lemma}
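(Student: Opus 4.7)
My plan is to embed $H$ into $G$ greedily using the $r$-strongly-degenerate ordering $v_1, \dots, v_h$ of $H$. Let $G$ be a graph on $N = h^2 n^r$ vertices with $\overline{G}$ containing no copy of $K_{n,n}$; the goal is to find a copy of $H$ in $G$.

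The first step is a structural observation about $G$: applying the K\H{o}v\'ari--S\'os--Tur\'an theorem to $\overline{G}$ gives $e(\overline{G}) = O(N^{2-1/n})$, so the average $\overline{G}$-degree in $G$ is $O(N^{1-1/n})$. For an appropriately chosen threshold $\tau$, the set $Y := \{v \in V(G) : d_{\overline{G}}(v) \le \tau\}$ contains almost all vertices of $G$, by Markov's inequality applied to $\overline{G}$-degrees.

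For the embedding itself, I would process $v_1, \dots, v_h$ in the given order and, at step $i$, choose $\sigma(v_i) \in Y$ to be a common $G$-neighbor of the already-embedded images $\sigma(v_j)$ with $v_j \in N^-(v_i)$, avoiding previously used vertices. By the definition of $r$-strongly-degenerate, $|N^-(v_i)| \le r$ in both cases (a) and (b): in case (a) this is immediate, and in case (b) it follows since $d_H(v_i) \le r$. Because each $\sigma(v_j)$ lies in $Y$ and hence has $d_{\overline{G}}(\sigma(v_j)) \le \tau$, the ``forbidden'' region in $V(G)$ (the complement of the common $G$-neighborhood) has size at most $r(\tau + 1)$; after subtracting the at most $h-1$ used vertices, the candidate set inside $Y$ should remain non-empty, provided $|Y|$ is close to $N$ and $r\tau$ is small compared to $N$.

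The main obstacle I anticipate is balancing the constants so that the greedy step always succeeds given the tight bound $N = h^2 n^r$. The competing constraints for $\tau$ are roughly $r\tau + 2 e(\overline{G})/\tau + h < N$, which is optimized at $\tau = \Theta(\sqrt{e(\overline{G})/r})$ and leads to the condition $\sqrt{r \cdot e(\overline{G})} \ll N$. Substituting the K\H{o}v\'ari--S\'os--Tur\'an bound on $e(\overline{G})$ reduces this to an inequality of the form $r \lesssim N^{1/n} = h^{2/n} n^{r/n}$, which holds comfortably in the relevant parameter ranges (the small remaining cases being essentially trivial, e.g.~$n=1$ makes $\overline{G}$ edgeless). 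I do not expect the distinction between cases (a) and (b) in the definition of $r$-strongly-degenerate to play an essential role in the greedy embedding itself; both produce the bound $|N^-(v_i)| \le r$ that drives the computation.
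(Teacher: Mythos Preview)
Your approach has a genuine gap: the K\H{o}v\'ari--S\'os--Tur\'an bound is far too weak when $n$ is large relative to $\log N$. With $N=h^2n^r$ and $h,r$ fixed, one has $N^{1/n}=h^{2/n}n^{r/n}\to 1$ as $n\to\infty$, so your required inequality $r\lesssim N^{1/n}$ fails for every $r\ge 2$ once $n$ is moderately large. Concretely, if $\overbar{G}$ is a random graph $G(N,1/2)$ then for $n>2\log_2 N$ it contains no $K_{n,n}$, yet every vertex has $\overbar{G}$-degree about $N/2$; your set $Y$ is then empty for any threshold $\tau<N/2$, and the greedy cannot even begin. Of course $G$ itself is also $G(N,1/2)$ and contains $H$ easily, but your argument does not detect this. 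The point is that a global edge bound on $\overbar{G}$ is simply not strong enough to drive a greedy embedding of $H$ into $G$.

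The paper's proof avoids this by running a \emph{forward-looking} greedy: it partitions $V(G)$ into $h$ blocks of size $hn^r$ and maintains, for each not-yet-embedded vertex $u_j$, a candidate set $A_{i,j}$ with the invariant $|A_{i,j}|\ge h\,n^{\,r-d_i(j)}$, where $d_i(j)$ counts the already-embedded neighbours of $u_j$. At step $i$ one seeks $v\in A_{i,i}$ whose $G$-neighbourhood meets each relevant $A_{i,j}$ in at least $hn^{\,r-d_{i+1}(j)}$ vertices. If no such $v$ exists, then for some forward neighbour $u_k$ there are at least $n$ vertices in $A_{i,i}$ each with fewer than $hn^{\,r-d_{i+1}(k)}$ $G$-neighbours in $A_{i,k}$; subtracting these neighbourhoods from $A_{i,k}$ leaves at least $n$ vertices, and one reads off an induced $\overbar{K_{n,n}}$ directly. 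Here the (a)/(b) distinction in the definition is \emph{essential}, contrary to your expectation: the failure at step $i$ forces $u_i$ to have a forward neighbour, whence $d_i(i)\le r-1$ in either case, so $|A_{i,i}|\ge hn$ --- exactly what is needed to extract $n$ vertices for one side of the empty bipartite graph. Mere $r$-degeneracy would only give $|A_{i,i}|\ge h$, which is why the lemma is stated for $r$-strongly-degenerate graphs.
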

\begin{proof}
    Let $u_1, \dots, u_h$ be an ordering of the vertices of $H$ certifying that $H$ is $r$-strongly-degenerate. We denote $d_i(j) = |N_H(u_j) \cap \{ u_1, \dots, u_{i-1}\}|.$ Consider an arbitrary graph $G$ on $N = h^2 n^r$ vertices. We show how to find either a copy of $H$ or a copy of $\overbar{K_{n,n}}.$ Split the vertex-set into $h$ parts $V_1, \dots, V_h$ each of size $hn^r.$ We will try to find an embedding $\phi \colon H \rightarrow V$ such that $\phi(u_j) \in V_j, $ for all $j \in [h].$ For this purpose, we will maintain sets $A_{i, j}$ into which we can embed the vertices, starting with $A_{1, j} = V_j, \, j \in [h].$ We will maintain the following. For any $1 \le i \le j \le h,$ 
    \begin{equation} \label{eq:set_sizes}
        |A_{i,j}| \ge h \cdot n^{r - d_i(j)},
    \end{equation}
    which is trivially satisfied for $i = 1.$ 
    
    Next we describe how to embed $H.$ Suppose we have embedded $u_1, \dots, u_{i-1}$ and we wish to embed $u_i.$ First suppose there exists a vertex $v \in A_{i, i}$ satisfying $|N(v) \cap A_{i, j}| \ge h n^{r - d_{i+1}(j)},$ for all $j > i$ such that $(u_i,u_j) \in E(H).$ Then, we set $\phi(u_i) = v$ and update the sets as follows:
    \[ A_{i+1, j} = 
    \begin{cases}
        A_{i, j} \cap N(v) &\text{if } (u_i,u_j) \in E(H),\\
        A_{i, j} &\text{otherwise.}
    \end{cases} \]
    It directly follows that \eqref{eq:set_sizes} is still satisfied.

    If we can embed all $h$ vertices in this manner, we obtain a copy of $H.$ Hence, for some $i,$ there is no vertex $v \in A_{i, i}$ satisfying $|N(v) \cap A_{i, j}| \ge n^{r - d_{i+1}(j)},$ for all $j > i$ such that $(u_i,u_j) \in E(H).$ Since $A_{i, i} \neq \emptyset$ (by \eqref{eq:set_sizes}), $u_i$ has a neighbour $u_j$ in $H$ with $j > i.$ By definition of an $r$-strongly-degenerate graph, it follows that $d_i(i) \le r-1$ so $|A_{i,i}| \ge hn$ by \eqref{eq:set_sizes}. By the pigeonhole principle, there is an index $k > i$ such that for at least $n$ vertices $v \in A_{i, i}$ we have $|N(v) \cap A_{i, k}| < hn^{r - d_{i+1}(k)}.$ Let $S \subseteq A_{i,i}$ be a set of $n$ such vertices and let $R = A_{i,k} \setminus \bigcup_{v \in S} N(v).$ By assumption $|N(v) \cap A_{i,k}| \le hn^{r-d_{i+1}(k)} - 1$ for every $v \in S$. Note that $d_{i+1}(k) = 1 + d_i(k)$ since $(u_i,u_k) \in E(H).$ Therefore,
    \[ |R| \ge |A_{i,k}| - n \cdot (hn^{r - d_{i+1}(k)}-1) \ge hn^{r - d_i(k)} - n \cdot (hn^{r-d_{i+1}(k)} - 1) =  n.\]
    By construction, $G[R, S]$ is empty which completes the proof.
\end{proof}

Note that every graph with maximum degree $r$ is $r$-strongly-degenerate, and every $r$-degenerate graph is $(r+1)$-strongly-degenerate. Hence, we have the following corollaries.

\begin{corollary}\label{cor:max degree}
    For any graph $H,$ $R(H, K_{n,n}) = O(n^{\Delta(H)}).$
\end{corollary}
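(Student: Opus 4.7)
The plan is to derive Corollary~\ref{cor:max degree} as an immediate consequence of Lemma~\ref{lem:strongly degenerate}. The key observation is that any graph $H$ with maximum degree $r = \Delta(H)$ is automatically $r$-strongly-degenerate: for \emph{any} ordering $v_1, \dots, v_h$ of the vertices of $H$, every vertex $v_i$ satisfies $d(v_i) \le \Delta(H) = r$, so condition (b) in the definition of $r$-strongly-degenerate is met for every $i$.

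Given this observation, I would simply invoke Lemma~\ref{lem:strongly degenerate} with $r = \Delta(H)$. This gives $R(H, K_{n,n}) \le h^2 n^{\Delta(H)}$, where $h = |V(H)|$ is a constant depending only on the fixed graph $H$. Since the implicit constant in $O(\cdot)$ may depend on $H$, we conclude $R(H, K_{n,n}) = O(n^{\Delta(H)})$.

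There is no real obstacle here: the corollary is a direct specialization of the lemma, and the only thing to verify is the trivial implication ``$\Delta(H) \le r \Rightarrow H$ is $r$-strongly-degenerate'', which holds by inspection of the definition.
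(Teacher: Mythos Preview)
Your proposal is correct and matches the paper's own argument exactly: the paper also notes that every graph with maximum degree $r$ is $r$-strongly-degenerate (since condition (b) holds for every vertex), and then deduces Corollary~\ref{cor:max degree} directly from Lemma~\ref{lem:strongly degenerate}.
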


\begin{corollary}\label{cor:degenerate}
    For any $r$-degenerate graph $H,$ $R(H, K_{n,n}) = O(n^{r+1}).$
\end{corollary}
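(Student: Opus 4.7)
The plan is to deduce the corollary immediately from Lemma \ref{lem:strongly degenerate} by checking that every $r$-degenerate graph is $(r+1)$-strongly-degenerate. Concretely, suppose $H$ is $r$-degenerate on $h$ vertices. By definition, there exists an ordering $v_1, \dots, v_h$ of $V(H)$ such that for every $i \in [h]$,
\[
|N_H(v_i) \cap \{v_1,\dots,v_{i-1}\}| \le r.
\]

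I would then verify that this same ordering witnesses that $H$ is $(r+1)$-strongly-degenerate. Indeed, condition (a) in the definition of $(r+1)$-strong-degeneracy asks that $|N_H(v_i) \cap \{v_1,\dots,v_{i-1}\}| \le (r+1)-1 = r$, which is exactly the $r$-degeneracy bound above. So condition (a) holds for every $i$ (we do not even need condition (b)), and $H$ is $(r+1)$-strongly-degenerate.

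Applying Lemma \ref{lem:strongly degenerate} with parameter $r+1$ then gives
\[
R(H, K_{n,n}) \le h^2 n^{r+1} = O(n^{r+1}),
\]
since $h = |V(H)|$ is a fixed constant depending only on $H$. There is no real obstacle here; the entire content is the observation that the $r$-degeneracy ordering automatically satisfies clause (a) of strong-degeneracy with parameter $r+1$, so the work done in Lemma \ref{lem:strongly degenerate} applies verbatim.
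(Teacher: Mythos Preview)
Your proof is correct and is exactly the paper's approach: the paper observes just before the corollary that every $r$-degenerate graph is $(r+1)$-strongly-degenerate, and then Lemma~\ref{lem:strongly degenerate} applies verbatim. Your verification that the degeneracy ordering satisfies clause~(a) with parameter $r+1$ is precisely the content of that observation.
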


\noindent
Finally, we show that Conjecture \ref{conj:e-v} holds if $K_n$ is replaced with $K_{n,n}$. 

\begin{proposition}\label{prop:e-v, complete bipartite}
Let $k \geq 1$. For every connected graph $H$ with $e(H) - v(H) \leq \binom{k+1}{2} - 2$ it holds that $R(H,K_{n,n}) = O(n^k)$. 
\end{proposition}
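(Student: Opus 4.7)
The approach is induction on $k$. The base case $k = 1$ corresponds to $H$ being a tree (as $e(H) - v(H) \leq -1$ forces a connected graph to be acyclic), and in this case $R(H, K_{n,n}) \leq R(H, K_{2n}) = O(n)$ by Chv\'atal's classical theorem \cite{Chvatal}. So assume $k \geq 2$ and the statement holds for $k-1$.

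I would first reduce to 2-connected $H$. By the standard block-tree identity, if $H$ is connected with biconnected components $H_1,\dots,H_b$ then $\sum_i\bigl(e(H_i)-v(H_i)\bigr) = e(H)-v(H) - (b-1)$, and since each block contributes at least $-1$ to the left-hand side, every block satisfies $e(H_i)-v(H_i) \leq e(H)-v(H) \leq \binom{k+1}{2}-2$. So by Corollary \ref{cor:2-connected} it suffices to bound $R(H_i,K_{n,n})$ for each $H_i$; bridges are trivial, and for the nontrivial blocks we may now assume $H$ itself is 2-connected.

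So assume $H$ is 2-connected. If $\Delta(H) \leq k$ then Corollary \ref{cor:max degree} immediately gives $R(H,K_{n,n}) = O(n^k)$. Otherwise, fix a vertex $u$ with $d(u)\geq k+1$ and let $H' = H - u$. Since $H$ is 2-connected, $H'$ is connected, and
\[
e(H') - v(H') \;=\; \bigl(e(H) - d(u)\bigr) - \bigl(v(H)-1\bigr) \;\leq\; \left(\binom{k+1}{2}-2\right) - k \;=\; \binom{k}{2}-2,
\]
so the inductive hypothesis applies and yields $R(H',K_{n,n}) \leq C_1 n^{k-1}$ for some constant $C_1 = C_1(H)$. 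Let $G$ be any graph on $N = 2C_1 n^k$ vertices. If $\Delta(G) \geq C_1 n^{k-1}$, pick $x$ of maximum degree and apply $R(H',K_{n,n}) \leq C_1 n^{k-1}$ inside $G[N(x)]$: either $G[N(x)]$ contains a copy of $H'$ (whose vertices, being in $N(x)$, are all adjacent to $x$ in $G$, so $x$ can play the role of $u$ and we obtain a copy of $H$), or $G[N(x)]$ contains an empty $K_{n,n}$, which is also empty in $G$. Otherwise $\Delta(G) < C_1 n^{k-1}$, and then $\alpha(G) \geq N/(\Delta(G)+1) \geq 2n$, so two disjoint halves of a maximum independent set yield an empty $K_{n,n}$.

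The main obstacle is purely the bookkeeping around the inductive set-up: verifying that the single operation of deleting one vertex of degree at least $k+1$ simultaneously (i) preserves connectedness, which is exactly what 2-connectedness buys us, and (ii) drops the excess from $\binom{k+1}{2}-2$ to $\binom{k}{2}-2$, so that the inductive hypothesis for $k-1$ applies without any slack loss. Everything else is a routine dichotomy on $\Delta(G)$: either $G$ has a vertex whose neighbourhood is large enough to invoke the induction, or else $G$ has a large independent set and we win directly.
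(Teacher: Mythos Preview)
Your proof is correct and follows essentially the same approach as the paper: induction on $k$, reduction to $2$-connected $H$ via Corollary~\ref{cor:2-connected}, the case $\Delta(H)\le k$ handled by Corollary~\ref{cor:max degree}, and otherwise deleting a vertex of degree at least $k+1$ to drop the excess to $\binom{k}{2}-2$ and invoke induction inside a large neighbourhood. The only cosmetic differences are that you spell out the base case and the block bookkeeping explicitly, and you phrase the endgame as a dichotomy on $\Delta(G)$ rather than ``no independent set of size $2n$ forces a high-degree vertex''; the content is the same (you may want to take $N$ a slightly larger constant times $n^k$ so that the inequality $N/(\Delta(G)+1)\ge 2n$ holds cleanly).
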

\begin{proof}
    The proof is by induction on $k$.
    As in the proof of Theorem \ref{thm:v-e, k=3}, we may assume that $H$ is $2$-connected due to Corollary \ref{cor:2-connected}. If $\Delta(H) \leq k$ then we are done by Corollary \ref{cor:max degree}. Else, let $v \in V(H)$ with $d(v) \geq k+1$, and let $H' = H - v$. Then $e(H') - v(H') \leq e(H) - (k+1) - v(H) + 1 \leq \binom{k}{2} - 2$. Also, $H'$ is connected because $H$ is $2$-connected. So by the induction hypothesis, we have $R(H',K_{n,n}) = O(n^{k-1})$. Now let $G$ be a graph on $N = Cn^k$ vertices with no $\overline{K_{n,n}}$. Then $G$ has no independent set of size $2n$. Hence, there exists $x \in V(G)$ with $d(x) \geq \frac{1}{2}Cn^{k-1} - 1$. By choosing $C$ large enough, we can make sure that $d(x) \geq R(H',K_{n,n})$. Then $G[N(x)]$ contains a copy of $H'$, which gives a copy of $H$ together with $x$. 
\end{proof}

\section{Concluding remarks and open problems}
\begin{itemize}
    \item It is worth mentioning an intriguing conjecture of Alon, Krivelevich and Sudakov \cite{Alon_Krivelevich_Sudakov}, that $R(H,K_n) \leq n^{O(r)}$ for every graph $H$ with $\Delta(H) \leq r$. Using the dependent random choice method, \cite{Alon_Krivelevich_Sudakov} showed that $R(H,K_n) = O(n^{(2r-k+2) \cdot (k-1)/2})$, where $k = \chi(H)$. So in the worst case $k = r$, the exponent is quadratic in $r$. The problem for $K_{n,n}$ (in place of $K_n$) turned out to be much easier and is resolved in Corollary \ref{cor:max degree}. 
    \item In Corollary \ref{cor:degenerate} we showed that $R(H,K_{n,n}) = O(n^{r+1})$ for an $r$-degenerate graph $H$. Can this be improved to $O(n^r)$? In particular, it would be very interesting to show that $R(H,K_{n,n}) = O(n^2)$ for every $2$-degenerate graph $H$.
    \item Balister et al. \cite{BSS} asked whether it is true that if $H$ is $2$-connected and has minimum degree $3$, then $H$ is {\bf not} Ramsey size-linear. A recent result of Janzer \cite{Janzer} gives a negative answer to this question. Indeed, \cite{Janzer} constructed $2$-connected $3$-regular bipartite graphs $H$ which have Tur\'an number at most $O(n^{3/2})$ (in fact, at most $O(n^{4/3+\varepsilon})$). Erd\H{o}s et al.~\cite{EFRS} observed that a bipartite graph $H$ with Tur\'an number at most $O(n^{3/2})$ is Ramsey size-linear. Hence, the graphs of \cite{Janzer} are Ramsey size-linear. 
    \item Theorem \ref{thm:vs bipartite} implies that $R(K_4^*,K_{n,n}) = O(n^2)$. For $K_n$, it is not difficult to prove that $R(K_4^*,K_n) = O(n^{5/2})$.
    Indeed, suppose that $G$ has $N = Cn^{5/2}$ vertices and no independent set of size $n$. 
    We may assume that $\delta(G) \geq \Omega(N/n)$, and then the average degree inside each neighbourhood is $\Omega(N/n^2)$. Also, each neighbourhood is $C_4$-free or else $G$ contains $K_4^*$ and we are done. It follows that there are at least 
    $N \cdot \Omega(N/n) \cdot \Omega(N/n^2)^2 \geq cN^4/n^5$ 4-tuples $x,y,z,w$ with $x \sim y,z,w$ and $y \sim z,w$. Here $c$ is some small absolute constant. On the other hand, the number of such 4-tuples $x,y,z,w$ with $d(z,w) \leq 2$ is at most $4\binom{N}{2} \leq 2N^2$. For $N = Cn^{5/2}$ with large enough $C$ (compared to $c$), we have $cN^4/n^5 > 2N^2$, so there is a 4-tuple $x,y,z,w$ with $d(z,w) \geq 3$. This gives a $K_4^*$. 
    It would be interesting to reduce the exponent $5/2$, hopefully all the way to $2$.
\end{itemize}

\paragraph{Acknowledgments:} The authors thank the anonymous referee for their careful reading of the paper and useful suggestions which improved the presentation.

\end{document}